\tikzset{dynkdot/.style={circle,draw,scale=.45}}
\definecolor{darkred}{rgb}{0.7,0,0} 
\newcommand{\defn}[1]{{\color{darkred}\emph{#1}}} 
\newcommand{\BB}{\mathcal{B}}
\newcommand{\cc}{{\bm c}}
\newcommand{\g}{\mathfrak{g}}
\newcommand{\ii}{{\mathbf i}}
\newcommand{\iso}{\approx}
\newcommand{\Kp}{\mathrm{Kp}}
\newcommand{\mm}{{\bm m}}
\newcommand{\MM}{\mathcal{M}}
\newcommand{\mybb}[1]{\mathbf{#1}}
\newcommand{\one}{\boldsymbol{1}}
\newcommand{\QQ}{\mybb{Q}}
\newcommand{\R}{\mathcal{R}}
\newcommand{\sage}{\textsc{SageMath}}
\newcommand{\wt}{{\rm wt}}
\newcommand{\virtual}[1]{\widehat{#1}}
\newcommand{\ZZ}{\mybb{Z}}
\newcommand{\vA}{\virtual{A}}
\newcommand{\va}{\virtual{\alpha}}
\newcommand{\vg}{\virtual{\mathfrak{g}}}
\newcommand{\vL}{\virtual{\Lambda}}
\newcommand{\vY}{\virtual{Y}}
\numberwithin{equation}{section}
\newtheorem{thm}{Theorem}[section]
\newtheorem{Lemma}[thm]{Lemma}
\newtheorem{problem}[thm]{Problem}
 { \theoremstyle{definition}
\newtheorem{dfn}[thm]{Definition}
\newtheorem{ex}[thm]{Example}
\newtheorem{Remark}[thm]{Remark}
}
\lstdefinelanguage{Sage}[]{Python}
{morekeywords={False,True},sensitive=true}
\begin{document}
\allowdisplaybreaks

\newcommand{\arXivNumber}{1707.09638}

\renewcommand{\PaperNumber}{103}

\FirstPageHeading

\ShortArticleName{Virtual Crystals and Nakajima Monomials}
\ArticleName{Virtual Crystals and Nakajima Monomials}

\Author{Ben SALISBURY~$^\dag$ and Travis SCRIMSHAW~$^\ddag$}
\AuthorNameForHeading{B.~Salisbury and T.~Scrimshaw}

\Address{$^\dag$~Department of Mathematics, Central Michigan University, Mount Pleasant, MI 48859, USA}
\EmailD{\href{mailto:ben.salisbury@cmich.edu}{ben.salisbury@cmich.edu}}
\URLaddressD{\url{http://people.cst.cmich.edu/salis1bt/}}

\Address{$^\ddag$~School of Mathematics and Physics, The University of Queensland,\\
\hphantom{$^\ddag$}~St.~Lucia, QLD 4072, Australia}
\EmailD{\href{tcscrims@gmail.com}{tcscrims@gmail.com}}
\URLaddressD{\url{https://people.smp.uq.edu.au/TravisScrimshaw/}}

\ArticleDates{Received March 28, 2018, in final form September 20, 2018; Published online September 26, 2018}

\Abstract{An explicit description of the virtualization map for the (modified) Nakajima monomial model for crystals is given. We give an explicit description of the Lusztig data for modified Nakajima monomials in type $A_n$.}

\Keywords{crystal; Nakajima monomial; virtualization; PBW basis; Kostant partition}

\Classification{05E10; 17B37}

\section{Introduction}

Diagram folding is an effective technique used to study a non-simply-laced Kac--Moody algebra~$\g$ by embedding $\g$ inside of a simply-laced Kac--Moody algebra $\vg$.  On the level of crystals, Kashiwara~\cite{K96} introduced a method for embedding highest weight $U_q(\g)$-crystals inside highest weight $U_q(\vg)$-crystals, provided the Dynkin diagram of~$\g$ can be obtained from the Dynkin diagram of~$\vg$ using a diagram folding. The embedding is called a virtualization map and resulting image with an induced crystal structure is called a virtual crystal.

It is known that simply-laced crystals have a nice local characterization~\cite{Stembridge03}, a description which is lacking in the non-simply-laced types (see~\cite{DKK09,Sternberg07} for partial results in this direction).  One can develop the theory of crystals using a combination of the Stembridge axioms and the theory of virtual crystals, which is the approach taken in the recent text of Bump and Schilling~\cite{BS17}. Therefore, developing an explicit virtualization map on different models for highest weight crystals and $B(\infty)$, the crystal of the lower half of the quantum group $U_q^-(\g)$, for the correct model could result in an extension of the Stembridge axioms to all symmetrizable types.

In terms of Kashiwara--Nakashima tableaux~\cite{KN94}, which only is valid for classical types,\footnote{Type $G_2$ was given by Kang--Misra tableaux~\cite{KM94}.} a~virtualization map has been explicitly described in~\cite{baker2000, SchillingS15}. However, it is a little complicated with specific rules depending on the type.  To give a more uniform description of a virtualization map, we can use the polyhedral model of~\cite{Nakashima99, NZ97}, which is described by iterating the Kashiwara embedding. This was the model used by Kashiwara in his construction~\cite{K96}, where he also gave precise conditions on when a virtualization map exists from type $\g$ into type $\vg$. This virtuali\-zation is natural in the sense that is commutes with the Kashiwara embedding (in a~suitable sense) and can be computed directly from the diagram folding (and using that virtual crystals form a tensor category~\cite{OSS03III}).

The virtualization map has also been explicitly described for a number of other (uniform) models for highest weight crystals. In~\cite{PS15}, the virtualization map for the Littelmann path model~\cite{L95, L95-2} was completely described, which naturally extends to a virtualization map of the Littelmann path model for $B(\infty)$ of~\cite{LZ11}. This was also examined by Naito and Sagaki in~\cite{NS01}, but the different type of diagram automorphism allowed can have an edge to be fixed under the automorphism but not the corresponding vertices. This was used to construct virtual crystals for certain Kirillov--Reshetikhin crystals~\cite{NS05II,NS10}, a class of finite $U_q'(\g)$-crystals for $\g$ of affine type.

If $\g$ is of finite type, then there is an analog of a PBW basis of the universal enveloping algebra for the quantum group (see, e.g.,~\cite{L90}). Lusztig constructed a canonical basis for $U_q^-(\g)$~\cite{L90}, where it was shown to be equivalent to Kashiwara's crystal basis \cite{GL93}. Moreover, the crystal basis is unitriangular with respect to the PBW basis and precisely one monomial survives in the $q \to 0$ limit. The exponents of this monomial are called the Lusztig datum of the element. The transition functions of~\cite{BZ01} between these PBW bases can be used to describe the crystal structure, but one needs to apply numerous piecewise-linear equations in order to do so. Recently, this was simplified for certain reduced expressions, where a bracketing rule was given on Lusztig data~\cite{SST17,SST16}.

Mirkovi\'c--Vilonen (MV) polytopes are the image of the moment map of MV cycles and are a~model for $B(\infty)$ for finite types~\cite{Kamnitzer07, Kamnitzer10}. Furthermore, MV polytopes encode all of the Lusztig data of an element, and so it is an equivalent model to the PBW bases. Furthermore, the virtualization map on MV polytopes was described explicitly in~\cite{JS15,NS08III}, and hence, extends to the Lusztig data/PBW bases. Note that the virtualization map is natural as it is induced from the corresponding embeddings of word, which comes from the diagram folding. Moreover, for certain choices of reduced words of the long element of the corresponding Weyl group, this naturally extends to the bracketing rules on Lusztig data.

Rigged configurations are a combinatorial model for highest weight crystals and $B(\infty)$ stemming from statistical mechanics~\cite{SalisburyS15,SalisburyS16,S06,SchillingS15}. The virtualization map on rigged configurations is also natural and explicitly determined~\cite{SalisburyS15,SchillingS15}. It was recently shown that the $*$-involution has a natural interpretation on rigged configurations~\cite{SalisburyS16II}, and it is straightforward to see that the $*$-involution commutes with the virtualization map.

The main model we focus on is given by Nakajima monomials, which were originally used to describe the $t$-analog of $q$-characters~\cite{Nakajima01,Nakajima03II,Nakajima03,Nakajima04}. We will primarily be using the crystal structure given by Kashiwara~\cite{K03II}, which is distinct from that given by Nakajima~\cite{Nakajima03}. This can be considered a generalization of the polyhedral model, which follows from looking at the tensor product rule~\cite{K91} and using the $A_{i,k}$ variables (corresponding to simple roots). Therefore, our explicit virtualization map can be considered a generalization of that of~\cite[Theorem~5.1]{K96}, but it also reflects the naturality of the map as the virtualization map can also be described using the $Y_{i,k}$ variables (corresponding to fundamental weights). Our results allow us to explicitly determine the conditions necessary to have a virtualization map, giving another proof of the results in~\cite{K96,PS15}.

We note that the naturality of the virtualization is a reflection of the close connection of Nakajima monomials to geometry; in particular, quiver varieties~\cite{Nakajima04,ST14}. This gives a further relation with the results of~\cite{Sav05}. Our results are further evidence that there should be a simple, explicit method to construct a bijection between Nakajima monomials and other models such as Littelmann paths or rigged configurations. Finding such a bijection between rigged configurations and Nakajima monomials would give a connection between the $*$-involution, the crystal commutor, and quiver varieties~\cite{HK06,KT09,Sav09}.

Let $\g$ be of affine type. We note that our virtualization map extends to the crystal structure on Nakajima monomials as given by Nakajima~\cite{Nakajima03}. As an application of our embedding, we obtain a virtualization map for KR crystals $B^{r,1}$, for certain $r \in I_0$ that also depends on the type, given by Nakajima monomials described in~\cite{HN06}. These are analogous results to~\cite{PS15}, along with some of the cases of~\cite{NS01,NS05II,NS10}. Thus, our results give an alternative proof of certain special cases of~\cite[Conjecture~3.7]{OSS03III}, which was recently settled for non-exceptional types in~\cite{Okado13} using type-specific arguments.

For type $A_n$, we give an explicit map between the modified Nakajima monomial model and the Kostant partition crystal (or equivalently, the PBW crystal) of~\cite{SST16} for a specific reduced word of the long element $w_0$ (the dual BZL word). Thus, we provide an explicit way to obtain the Lusztig data for the specific word from the Nakajima monomial model. A goal of this paper was to obtain similar data for types $B_n$ and $C_n$ using our virtualization map on Nakajima monomials. However, we have a certain aligned condition, which we can consider as compatible orientations on the corresponding Dynkin diagrams, that is not compatible with the reduced expression for~$w_0$. We can combine this with the transition maps of~\cite{BZ01}, but this is not combinatorial or simple to describe. So our approach will need to be modified in order to obtain the Lusztig data from Nakajima monomials in types $B_n$ and $C_n$.

This paper is organized as follows. In Section~\ref{sec:monomials}, we recall the definition of (modified) Nakajima monomials. In Section~\ref{sec:virtual}, we recall the definition of virtual crystals. In Section~\ref{sec:virtualization_map}, we construct the virtualization map on Nakajima monomials. In Section~\ref{sec:KR_crystals}, we extend our virtualization map to certain KR crystals as given by~\cite{HN06}. In Section~\ref{sec:monomials_pbw}, we give an explicit map between Nakajima monomials and PBW crystals or Lusztig data. In Section~\ref{sec:problems}, we give some open problems from our work.

\section{Crystals and Nakajima monomials}\label{sec:monomials}

Since we will only be working with one particular model for crystals, we explicitly describe the properties of that model and leave the generalities for the interested reader to pursue.  (See, for example,~\cite{BS17,HK02}.)

Let $C = (C_{ij})_{i,j\in I}$ be a generalized Cartan matrix for a symmetrizable Kac--Moody algebra $\g$ with index set $I$, weight lattice $P$, fundamental weights $\{\Lambda_i\colon i\in I\}$, and simple roots $\{\alpha_i \colon i\in I\}$.  Let $P^{\vee}$ denote the coroot lattice.  For $\lambda \in P$ and $h \in P^{\vee}$, let $\langle h, \lambda \rangle = \lambda(h)$ be the canonical pairing.  Recall that $\Lambda_j(h_i) = \delta_{ij}$ (Kronecker delta) and $\alpha_j(h_i) = C_{ij}$, where $\{ h_i\colon i\in I \}$ is the set of simple coroots in $P^\vee$. We identity simple roots in the weight space $\QQ\otimes_\ZZ P$ via the Cartan matrix; that is, in $\QQ\otimes_\ZZ P$, we have $\alpha_i = \sum\limits_{j\in I} C_{ji}\Lambda_i$. We write $i \sim j$ if $i \neq j$ and $C_{ij} \neq 0$; i.e., if $i$ and $j$ are adjacent in the Dynkin diagram of~$\g$.

Let $\MM = \{Y_{i,k}\colon i \in I ,\, k \in \ZZ \} \cup \{\one\}$ be a set of commuting variables and in which $\one$ denotes the multiplicative identity. Choose an array $\cc = (c_{ij}\colon i,j\in I,\, i\neq j)$ of integers which satisfy the condition $c_{ij} + c_{ji} = 1$. Define $\MM_{\cc}$ as $\MM$ with a $U_q(\g)$-crystal structure as follows. For a~monomial $M = \prod\limits_{i\in I} \prod\limits_{k\in \ZZ} Y_{i,k}^{y_i(k)}$, define
\begin{subequations}\label{eq:mon_cry_ops}
\begin{gather}
\mathrm{wt}(M)  = \sum_{i\in I} \bigg( \sum_{k \in \ZZ} y_i(k) \bigg) \Lambda_i, \\
\varphi_i(M)  = \max\bigg\{ \sum_{j \le k} y_i(j)\colon k \in \ZZ \bigg\}, \\
\varepsilon_i(M)  = \varphi_i(M) - \langle h_i, \mathrm{wt}(M) \rangle.\label{eq:oldep}
\\ f_i M   = \begin{cases}
M A^{-1}_{i,k_i^f} & \text{if } \varphi_i(M) > 0,\\
0 & \text{if } \varphi_i(M) = 0,
\end{cases}
\\ e_i M   = \begin{cases}
M A_{i,k_i^e} & \text{if } \varepsilon_i(M) > 0,\\
0 & \text{if } \varepsilon_i(M) = 0,
\end{cases}
\end{gather}
\end{subequations}
where
\begin{gather*}
A_{i,k}  = Y_{i,k}Y_{i,k+1} \prod_{j \neq i}Y_{j,k+c_{j,i}}^{C_{ji}}, \\
k_i^f = k_i^f(M)  = \min\bigg\{ k\colon \varphi_i(M) = \sum_{j \le k} y_i(j) \bigg\}, \\
k_i^e = k_i^e(M)  = \max\bigg\{ k\colon \varphi_i(M) = \sum_{j \le k} y_i(j) \bigg\}.
\end{gather*}

\begin{Remark}We can also define $\varepsilon_i$ by
\begin{gather*}
\varepsilon_i(M) = \max\bigg\{ {-} \sum_{j>k} y_i(j)\colon k \in \ZZ \bigg\}.
\end{gather*}
\end{Remark}

Let $\MM(M)_{\cc}$ denote the closure of a monomial $M$ under the crystal operators given above, and let $B(\lambda)$ denote the $U_q(\g)$-crystal associated to the irreducible highest weight representa\-tion~$V(\lambda)$ of $U_q(\g)$ with highest weight $\lambda \in P$.

\begin{thm}[Kashiwara~\cite{K03II}]\label{thm:highest_weight_model} Let $M$ be a monomial such that $e_i M = 0$ for all $i \in I$. Then $\MM(M)_{\cc} \cong B\bigl(\wt(M)\bigr)$.
\end{thm}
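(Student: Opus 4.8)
The plan is to realize $\MM(M)_{\cc}$ as an isomorphic copy of a known model for $B\bigl(\wt(M)\bigr)$, rather than verifying any intrinsic axioms directly; the most economical route uses the uniqueness of highest-weight crystals together with an explicit connection to the polyhedral/Kashiwara-embedding model. First I would observe that the crystal $\MM(M)_{\cc}$ is \emph{upper seminormal}: by construction $\varepsilon_i$ is computed from the ``bracketing'' of the sequence $\bigl(y_i(k)\bigr)_{k\in\ZZ}$ via the maximum in the Remark, and one checks from the definitions of $k_i^e$, $k_i^f$ and the fact that multiplying by $A_{i,k}^{\pm1}$ shifts $y_i(\,\cdot\,)$ by $\pm(\delta_{\,\cdot\,,k}+\delta_{\,\cdot\,,k+1})$ that $e_i$ and $f_i$ are mutually inverse partial bijections and that $\varepsilon_i(e_iM)=\varepsilon_i(M)-1$, $\varphi_i(f_iM)=\varphi_i(M)-1$, with $\varepsilon_i(M)=\max\{n:e_i^nM\neq0\}$. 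This is the routine part.

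Next I would identify a highest weight vertex. Since $e_iM=0$ for all $i$, the element $M$ has $\wt(M)$ dominant (indeed $\langle h_i,\wt(M)\rangle=\varphi_i(M)\ge 0$ because $\varepsilon_i(M)=0$), so it makes sense to compare with $B\bigl(\wt(M)\bigr)$. The key step is to show that $\MM(M)_{\cc}$ is \emph{connected} with unique highest weight vertex $M$, which is immediate from the definition of $\MM(M)_{\cc}$ as the closure of $M$ under the $e_i,f_i$ (any vertex is obtained from $M$ by a sequence of $f_i$'s, after possibly applying $e_i$'s, and all highest weight vertices in a seminormal crystal generated by one element coincide with that element).

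The main obstacle is proving the isomorphism $\MM(M)_{\cc}\cong B\bigl(\wt(M)\bigr)$ itself; for this I would invoke the characterization of $B(\lambda)$ via iterated Kashiwara embeddings into $B(\infty)\otimes T_\lambda$ (the polyhedral realization). Concretely, fix an infinite sequence $\ii=(\ldots,i_2,i_1)$ in which each index of $I$ appears infinitely often; the Nakajima monomial crystal admits a strict embedding $\Psi_\ii\colon \MM(M)_{\cc}\hookrightarrow \ZZ^{\infty}_\ii\otimes T_{\wt(M)}$ recording, for each application of $f_{i_\ell}$, the relevant exponent shift, because the crystal operators only ever change one ``column'' of exponents at a time and the formulas for $\varphi_i,\varepsilon_i$ match the tensor product rule for $T_\lambda\otimes(\text{polyhedral }B(\infty))$ under the substitution $Y_{i,k}\leftrightarrow$ fundamental weight data and $A_{i,k}\leftrightarrow\alpha_i$. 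One then checks the image satisfies the Nakashima–Zelevinsky positivity conditions cutting out $B(\infty)\otimes T_\lambda$, or more simply invokes that $\MM(M)_{\cc}$ is a seminormal, connected, highest-weight crystal of dominant highest weight $\wt(M)$ with all $\varphi_i$ finite, and apply the standard fact (e.g.~\cite{HK02,BS17}) that such a crystal admitting a strict embedding into $B(\infty)\otimes T_\lambda$ with highest weight mapping to $u_\infty\otimes t_\lambda$ must be isomorphic to $B(\lambda)$. The delicate point — and where I would spend the most care — is verifying that the map $M\mapsto(\varphi,\varepsilon,f_i)$-data genuinely satisfies the \emph{tensor product rule}, i.e.\ that the Nakajima recipe for choosing $k_i^f$ is exactly the recipe dictated by $B(\infty)\otimes T_\lambda$; this is a bookkeeping argument comparing the ``$\max$ over partial sums'' formula for $\varphi_i$ with the signature rule, and it is essentially the content of Kashiwara's original proof, which I would cite and then sketch the bracketing-of-signs correspondence in detail.
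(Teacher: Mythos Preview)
The paper does not supply a proof of this theorem: it is stated as a result of Kashiwara with the attribution \emph{[Kashiwara~\cite{K03II}]} and no argument is given in the text. So there is no ``paper's own proof'' against which to compare your proposal; the authors simply cite the result and move on.

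That said, your outline is broadly in the spirit of Kashiwara's original argument in~\cite{K03II}, which does proceed by relating the monomial crystal to the tensor-product/polyhedral picture and then invoking uniqueness of highest-weight crystals. A few remarks on the sketch itself: the claim that $\MM(M)_{\cc}$ has $M$ as its \emph{unique} highest weight vertex is not quite ``immediate'' from being the closure of $M$ --- closure under both $e_i$ and $f_i$ does not a priori preclude other highest-weight elements appearing --- and in Kashiwara's argument this is in fact a consequence of the embedding rather than an input to it. Likewise, your appeal to ``the standard fact'' that a seminormal connected highest-weight crystal embedding strictly into $B(\infty)\otimes T_\lambda$ must be $B(\lambda)$ is doing all the real work, and the construction of the strict embedding $\Psi_{\ii}$ (in particular, checking that the choice of $k_i^f$ matches the tensor product signature rule with the specific ordering dictated by $\cc$) is exactly the substantive content of~\cite{K03II}. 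Your proposal correctly identifies this as the delicate point but does not carry it out; for a self-contained write-up you would need to make the bracketing correspondence precise, which is essentially reproducing Kashiwara's proof.
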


In particular, denote $\MM(\lambda)_{\cc} := \MM(Y_{\lambda})_{\cc}$, where
\begin{gather*}
Y_{\lambda} = \prod_{i \in I} Y_{i,0}^{\langle h_i, \lambda \rangle}.
\end{gather*}
This is referred to as the $B(\lambda)$ model using \defn{Nakajima monomials}.

Next, suppose $c_{ij} \in \ZZ_{\ge0}$ for all $i$, $j$. We define $\MM(\infty)_{\cc}$ as the closure of $\one$ under the modified Kashiwara operators
\begin{gather*}
\overline{f}_i M = MA_{i,\overline{k}_i^f}^{-1},
\end{gather*}
where
\begin{gather*}
\overline{k}_i^f = \min\bigg\{ k\ge 0\colon \varphi_i(M) = \sum_{0\le j\le k} y_i(j) \bigg\},
\end{gather*}
with the remaining crystal structure being the same as for $\MM_{\cc}$.  Note if $M \in \MM(\infty)_\cc$, then $M$ has the form
\begin{gather}\label{eq:Minfge0}
M = \prod_{i\in I} \prod_{k\ge 0} Y_{i,k}^{y_i(k)}.
\end{gather}
When there is no danger of confusion, we will simply write the modified Kashiwara operator as~$f_i$. The crystal $\MM(\infty)_{\cc}$ is called the crystal of \defn{modified Nakajima monomials}.

\begin{thm}[Kang--Kim--Shin \cite{KKS07}] Let $\g$ be of symmetrizable type. Then $\MM(\infty)_\cc \cong B(\infty)$.
\end{thm}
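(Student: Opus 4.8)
The plan is to obtain the isomorphism from the highest weight model of Theorem~\ref{thm:highest_weight_model} by realizing $\MM(\infty)_\cc$ as an inverse limit of the crystals $\MM(\lambda)_\cc\cong B(\lambda)$. Fix an array $\cc$ with all $c_{ij}\ge 0$ (one exists: choose a total order on $I$ and set $c_{ij}=1$ exactly when $j>i$), so that $\MM(\infty)_\cc$ is defined and its elements have the form \eqref{eq:Minfge0}. For dominant $\lambda$, let $T_\lambda=\{t_\lambda\}$ be the one-element crystal of weight $\lambda$ whose Kashiwara operators vanish, and define $\pi_\lambda\colon\MM(\infty)_\cc\otimes T_\lambda\to\MM(\lambda)_\cc$ by $\pi_\lambda(M\otimes t_\lambda)=Y_\lambda M$ if $Y_\lambda M\in\MM(\lambda)_\cc$ and $\pi_\lambda(M\otimes t_\lambda)=0$ otherwise.

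First one records the easy structural facts: $\MM(\infty)_\cc$ is closed under the $e_i$ (hence is a genuine subcrystal); $\wt(\MM(\infty)_\cc)$ lies in the non-positive root lattice, with $\one$ its unique element of weight $0$, since each $f_i$ subtracts a simple root; and $\varepsilon_i(\one)=0$ while $\varepsilon_i(M)\in\ZZ$ for all $M$, directly from \eqref{eq:mon_cry_ops} and the fact that $y_i(j)=0$ for $j<0$. The substantive step is to show each $\pi_\lambda$ is a surjective strict morphism sending $\one\otimes t_\lambda\mapsto Y_\lambda$: surjectivity and the image of $\one$ are immediate since $\MM(\lambda)_\cc$ is generated by $Y_\lambda$, while the intertwining with $e_i$ and $f_i$ reduces to comparing $\overline{k}_i^f(M)$ — the minimum over $k\ge 0$ of the truncated partial sums of $M$ — with $k_i^f(Y_\lambda M)$ for the honest operator, the only difference being the non-negative amount $\langle h_i,\lambda\rangle$ contributed by $Y_\lambda$ at index $0$. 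One expects the two indices to coincide whenever $f_i(Y_\lambda M)\ne 0$, while in the remaining case $Y_\lambda(f_i M)$ lies outside $\MM(\lambda)_\cc$. This comparison — which is where the hypothesis $c_{ij}\ge 0$ and the precise normalization of $A_{i,k}$ enter — is the main obstacle; the rest is formal.

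Granting it, the $\pi_\lambda$ are compatible under the standard projections $B(\lambda+\mu)\to B(\lambda)$ (equivalently, under multiplication by $Y_\mu$), and the family $\{\pi_\lambda\}$ separates points of $\MM(\infty)_\cc$ because any two distinct monomials have finite support and hence are distinguished by $\pi_\lambda$ once $\lambda$ is sufficiently dominant. Therefore $\MM(\infty)_\cc\cong\varprojlim_\lambda\bigl(\MM(\lambda)_\cc\otimes T_{-\lambda}\bigr)\cong\varprojlim_\lambda\bigl(B(\lambda)\otimes T_{-\lambda}\bigr)\cong B(\infty)$, the last step being the standard realization of $B(\infty)$ as an inverse limit.

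An alternative route is to verify the recognition theorem for $B(\infty)$ directly (see, e.g.,~\cite{BS17}): with distinguished element $\one$, the conditions on weights and on $\varepsilon_i$ are the easy facts recorded above, and the remaining hypothesis is the existence, for each $i$, of a strict crystal embedding $\Psi_i\colon\MM(\infty)_\cc\hookrightarrow\MM(\infty)_\cc\otimes B_i$ into the tensor with the elementary crystal $B_i$ at the node $i$, with $\Psi_i(\one)=\one\otimes b_i(0)$ and image in the non-positive part of $B_i$. Building $\Psi_i$ and checking that it is strict — not merely compatible with the generating operators $f_i$ — is once more a bookkeeping exercise with the exponent shifts induced by the $A_{j,k}^{-1}$, so the real difficulty is the same as in the first approach.
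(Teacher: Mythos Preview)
The paper does not give its own proof of this theorem: it is stated with attribution to Kang--Kim--Shin~\cite{KKS07} and used as a black box. So there is nothing to compare your argument against line by line. That said, the paper does contain two remarks that touch on the ideas in your sketch. Immediately after the theorem it records the direct-limit description $\MM(\infty)_{\cc}=\varinjlim_{\lambda} Y_\lambda^{-1}\MM(Y_\lambda)$, which is essentially your first approach (note the direction: a \emph{direct} limit, not an inverse limit as you wrote; the maps go $Y_\lambda^{-1}\MM(\lambda)_\cc\hookrightarrow Y_\mu^{-1}\MM(\mu)_\cc$ for $\lambda\le\mu$, and there is no natural projection $B(\mu)\to B(\lambda)$ of highest weight crystals). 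Later, the Remark after Theorem~\ref{thm:monomial_virtualization} identifies $\MM(\infty)_\cc$ with the image of the iterated Kashiwara embedding into $\cdots\otimes\BB_{i_2}\otimes\BB_{i_1}$, and says this is ``a special case of the isomorphism to prove~\cite[Theorem~3.1]{KKS07}''. That is precisely the mechanism behind your second approach via the recognition theorem and the embeddings $\Psi_i$.

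Your sketch is honest about where the work lies --- the comparison of $\overline{k}_i^f(M)$ with $k_i^f(Y_\lambda M)$, and the strictness of $\Psi_i$ --- and you correctly flag these as the genuine content. One concrete point to fix: replace your inverse-limit formulation by the direct limit (or, equivalently, argue that the family of injections $Y_\lambda^{-1}\MM(\lambda)_\cc\hookrightarrow\MM(\infty)_\cc$ is exhaustive), since the ``projections $B(\lambda+\mu)\to B(\lambda)$'' you invoke do not exist as crystal maps. With that correction, either of your outlines is a viable route, and the second is in fact the one taken in~\cite{KKS07} as the paper's Remark indicates.
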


Let $\lambda \in P$ be a dominant integral weight.  Define $R_\lambda = \{r_\lambda\}$ to be the one-element abstract $U_q(\g)$-crystal whose operations, for all $i \in I$, are defined as
\begin{gather*}
e_i r_\lambda = f_i r_\lambda = 0,
\qquad
\varepsilon_i(r_{\lambda}) = -\langle h_i, \lambda \rangle,
\qquad
\varphi_i(r_\lambda) = 0,
\qquad
\wt(r_\lambda) = \lambda.
\end{gather*}
By \cite{K93}, there is a strict crystal embedding $B(\lambda) \lhook\joinrel\longrightarrow B(\infty) \otimes R_\lambda$, where $\otimes$ denotes the crystal tensor product defined in~\cite{K91}.  We will not require the general definition of tensor product of crystals, but note that the connected component containing $u_{\infty} \otimes r_{\lambda}$, where $u_{\infty}$ is the highest weight element of $B(\infty)$, is isomorphic to $B(\lambda)$. The crystal operators of the tensor product $M \otimes r_{\lambda} \in \MM(\infty)_{\cc} \otimes R_{\lambda}$ precisely corresponds to taking $Y_{\lambda} M$ for any $M \in \MM(\infty)_{\cc}$ and using the (unmodified) crystal structure of~\eqref{eq:mon_cry_ops}. Going in the opposite direction, we can construct~$\MM(\infty)_{\cc}$ as the direct limit
\begin{gather*}
\MM(\infty)_{\cc} = \varinjlim_{\lambda \to \infty} Y_{\lambda}^{-1} \MM(Y_{\lambda}),
\end{gather*}
where we consider the partial order $\lambda \leq \mu$ if and only if $\langle h_i, \lambda \rangle \leq \langle h_i, \mu \rangle$ for all $i \in I$. We refer the reader to~\cite{K02} for a precise definition.

\section{Virtual crystals} \label{sec:virtual}

A \defn{diagram folding} is a surjective map $\phi\colon \virtual{I} \longrightarrow I$ between index sets of Kac--Moody algebras and a set $(\gamma_i \in \ZZ_{>0}\colon i \in I)$ of \defn{scaling factors}.  One may induce a~map from $\phi$ on the corresponding weight lattices $\widetilde{\phi} \colon P \longrightarrow \virtual{P}$ by asserting
\begin{gather}
\label{eq:weight_embedding}
\Lambda_i \mapsto \gamma_i \sum_{i' \in \phi^{-1}(i)} \vL_{i'}.
\end{gather}
If there exists an embedding $\iota\colon \g \lhook\joinrel\longrightarrow \vg$ of symmetrizable Kac--Moody algebras that induces $\widetilde{\phi}$, then $\iota$ induces an injection $v \colon B(\lambda) \lhook\joinrel\longrightarrow B(\virtual{\lambda})$ as sets, where $\virtual{\lambda} := \widetilde{\phi}(\lambda)$. Suppose the crystal structure on $B(\lambda)$ is denoted by $(e_i,f_i,\varepsilon_i,\varphi_i,\wt)$ and the crystal structure on $B(\virtual{\lambda})$ is denoted by $(\virtual{e}_i, \virtual{f}_i, \virtual{\varepsilon}_i, \virtual{\varphi}_i, \virtual{\wt})$. Then there is additional structure on the image under $v$ as a \defn{virtual crystal}, where the action $e_i$ and $f_i$ are defined on the image as
\begin{gather}
\label{eq:virtual_crystal_ops}
e^v_i = \prod_{i' \in \phi^{-1}(i)} \virtual{e}_{i'}^{\,\gamma_i}
\quad \quad
\text{ and }
\quad \quad
f^v_i = \prod_{i' \in \phi^{-1}(i)} \virtual{f}_{i'}^{\,\gamma_i},
\end{gather}
respectively, and commute with $v$~\cite{baker2000,OSS03III,OSS03II}; i.e., $e_i^v\circ v = v\circ e_i$ for all $i\in I$, etc. These are known as the \defn{virtual Kashiwara (crystal) operators}. It is shown in~\cite{K96} that for any $i \in I$ and $i_1',i_2' \in \phi^{-1}(i)$ we have $\virtual{e}_{i_1'} \virtual{e}_{i_2'} = \virtual{e}_{i_2'} \virtual{e}_{i_1'}$ and $\virtual{f}_{i_1'} \virtual{f}_{i_2'} = \virtual{f}_{i_2'} \virtual{f}_{i_1'}$ as operators (recall that the nodes indexed by $i_1'$ and $i_2'$ in the Dynkin diagram of $\vg$ are not connected), so both $e^v_i$ and $f^v_i$ are well-defined. The inclusion map $v$ also satisfies the relation $\widetilde{\phi} \circ \wt = \virtual{\wt}\circ v$.
In~\cite{baker2000}, it was shown that this defines a $U_q(\g)$-crystal structure on the image of $v$.  More generally, we define a virtual crystal as follows.

\begin{dfn}\label{def:virtual}Consider any symmetrizable types $\g$ and $\vg$ with index sets $I$ and $\virtual{I}$, respectively. Let $\phi \colon \virtual{I} \longrightarrow I$ be a surjection such that $\virtual{C}_{i_1'i_2'} = 0$ for all $i_1',i_2' \in \phi^{-1}(i)$ and $i \in I$. Let $\virtual{B}$ be a~$U_q(\vg)$-crystal and $V \subseteq \virtual{B}$.  Let $\gamma = (\gamma_i \in \ZZ_{>0} \colon i \in I)$ be the scaling factors. A \defn{virtual crystal} is the quadruple $(V, \virtual{B}, \phi, \gamma)$ such that $V$ has an abstract $U_q(\g)$-crystal structure defined using the Kashiwara operators $e_i^v$ and $f_i^v$ from~\eqref{eq:virtual_crystal_ops} above,
\begin{gather*}
\varepsilon_i^v(x)  := \frac{\virtual{\varepsilon}_{i'}(x)}{\gamma_i}, \qquad
\varphi_i^v(x)  := \frac{\virtual{\varphi}_{i'}(x)}{\gamma_i},  \qquad \text{for all}  \quad i'\in \phi^{-1}(i) \quad \text{and}\quad x \in V,
\end{gather*}
and $\wt^v := \widetilde{\phi}^{-1} \circ \virtual{\wt}$.
\end{dfn}
We say $B$ \defn{virtualizes} in $\virtual{B}$ if there exists a $U_q(\g)$-crystal isomorphism $v \colon B \longrightarrow V$.  The resulting isomorphism is called the \defn{virtualization map}. We denote the quadruple $(V,\virtual{B},\phi,\gamma)$ simply by $V$ when there is no risk of confusion.

\section{The virtualization map} \label{sec:virtualization_map}

Assume the notations of the previous sections.  We will say $\virtual{\cc} = (\virtual{c}_{i'j'})_{i'\neq j'}$ is \defn{compatible} with $\cc$ when $\virtual{c}_{i'j'} = c_{ij}$ for all $i' \in \phi^{-1}(i)$ and $j' \in \phi^{-1}(j)$ satisfying $i' \sim j'$ and $i \sim j$. For this section assume that $\virtual{\cc}$ is compatible with $\cc$. Let $\virtual{\MM}_{\virtual{\cc}}$ be the set of modified Nakajima monomials of type $\vg$ in the variables $\{\vY_{i,k}\colon i \in \virtual{I},\, k\in \ZZ\}$ and identity $\virtual{\one}$ equipped with $U_q(\vg)$-crystal structure denoted by $\virtual{e}_i$, $\virtual{f}_i$, $\virtual{\varepsilon}_i$, $\virtual{\varphi}_i$, and $\virtual{\wt}$. With these definitions, define a map
\begin{gather*}
v\colon \ \MM_\cc \longrightarrow \virtual{\MM}_{\virtual{\cc}} \qquad \text{by}\qquad  Y_{i,k} \mapsto \prod_{i' \in \phi^{-1}(i)} \vY_{i',k}^{\gamma_i}
\end{gather*}
and extend multiplicatively. By abuse of notation, we also consider $v \colon \MM(\infty)_{\cc} \longrightarrow \virtual{\MM}(\infty)_{\virtual{\cc}}$.

\begin{Lemma}\label{lem:AtoAhat} Suppose
\begin{gather*}
\widetilde{\phi}(\alpha_i) = \displaystyle\sum_{i'\in\phi^{-1}(i)} \gamma_i\va_{i'}
\end{gather*}
for all $i \in I$.  Then, for all $i\in I$ and $k \in \ZZ_{\ge0}$,
\begin{gather*}
v(A_{i,k}) = \prod_{i'\in \phi^{-1}(i)} \vA_{i',k}^{\gamma_i}.
\end{gather*}
\end{Lemma}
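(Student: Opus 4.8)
The plan is to compute $v(A_{i,k})$ directly from the definitions and compare it with $\prod_{i' \in \phi^{-1}(i)} \vA_{i',k}^{\gamma_i}$. Recall that
$A_{i,k} = Y_{i,k} Y_{i,k+1} \prod_{j \neq i} Y_{j,k+c_{j,i}}^{C_{ji}}$,
so applying $v$ and using multiplicativity together with $v(Y_{i,k}) = \prod_{i' \in \phi^{-1}(i)} \vY_{i',k}^{\gamma_i}$ gives
\begin{gather*}
v(A_{i,k}) = \prod_{i' \in \phi^{-1}(i)} \vY_{i',k}^{\gamma_i} \vY_{i',k+1}^{\gamma_i} \prod_{j \neq i} \prod_{j' \in \phi^{-1}(j)} \vY_{j',k+c_{j,i}}^{\gamma_i \gamma_j C_{ji}}.
\end{gather*}
On the other hand,
$\vA_{i',k} = \vY_{i',k} \vY_{i',k+1} \prod_{j' \neq i'} \vY_{j',k+\virtual{c}_{j',i'}}^{\virtual{C}_{j'i'}}$,
and the claim is that raising this to the $\gamma_i$-th power and taking the product over $i' \in \phi^{-1}(i)$ yields the same expression. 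The first two factors $\vY_{i',k}^{\gamma_i} \vY_{i',k+1}^{\gamma_i}$ match immediately, so the content is entirely in matching the off-diagonal factors.

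For the off-diagonal part, I would split the product $\prod_{j' \neq i'}$ over $\vg$ into two pieces: the terms where $j' \in \phi^{-1}(i)$ (i.e.\ $\phi(j') = i = \phi(i')$ but $j' \neq i'$), and the terms where $\phi(j') = j \neq i$. For the first piece, the hypothesis on the folding gives $\virtual{C}_{j'i'} = 0$ whenever $\phi(j') = \phi(i')$ and $j' \neq i'$ (nodes in a fiber are non-adjacent in the Dynkin diagram of $\vg$), so these factors are trivial and contribute nothing. For the second piece, I would use the compatibility of $\virtual{\cc}$ with $\cc$: when $j' \sim i'$ and $j \sim i$ we have $\virtual{c}_{j'i'} = c_{ji}$, and when $j' \not\sim i'$ we have $\virtual{C}_{j'i'} = 0$ so the factor drops out. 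This reduces the contribution over a fixed $j \neq i$ and $i' \in \phi^{-1}(i)$ to $\prod_{j' \in \phi^{-1}(j),\, j' \sim i'} \vY_{j',k+c_{j,i}}^{\gamma_i \virtual{C}_{j'i'}}$.

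The main obstacle — and the only place the new hypothesis $\widetilde{\phi}(\alpha_i) = \sum_{i' \in \phi^{-1}(i)} \gamma_i \va_{i'}$ is used — is establishing the numerical identity
\begin{gather*}
\sum_{i' \in \phi^{-1}(i)} \gamma_i \virtual{C}_{j'i'} = \gamma_i \gamma_j C_{ji}
\qquad \text{for each fixed } j' \in \phi^{-1}(j),
\end{gather*}
equivalently $\sum_{i' \in \phi^{-1}(i)} \virtual{C}_{j'i'} = \gamma_j C_{ji}$, and checking that the exponent of a given $\vY_{j',k+c_{j,i}}$ coming from $v(A_{i,k})$ (namely $\gamma_i \gamma_j C_{ji}$) equals what is assembled from the $\vA_{i',k}^{\gamma_i}$ side (namely $\gamma_i \sum_{i' \in \phi^{-1}(i)} \virtual{C}_{j'i'}$). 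To extract this, I would pair the folding hypothesis with a simple coroot $\va_{j'^\vee}$: since $\virtual{C}_{j'i'} = \va_{i'}(\virtual{h}_{j'})$ and, after translating the weight-side identity $\widetilde{\phi}(\Lambda_i) = \gamma_i \sum_{i'\in\phi^{-1}(i)}\vL_{i'}$ through the Cartan matrix, evaluation against $\virtual{h}_{j'}$ of both sides of $\widetilde\phi(\alpha_i) = \sum_{i'}\gamma_i\va_{i'}$ gives exactly $\sum_{i' \in \phi^{-1}(i)} \gamma_i \virtual{C}_{j'i'}$ on the right, while the left side, using that $\widetilde\phi$ respects the pairing in the folded sense, evaluates to $\gamma_i\gamma_j C_{ji}$ (using $c_{ij}+c_{ji}=1$ is not needed here, only the standard compatibility of the folding with the bilinear form). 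Once this exponent-matching is in place, the two products agree term by term in every $\vY_{j',m}$, and the lemma follows. Everything else is bookkeeping: reindexing the products, and noting $c_{j,i} = \virtual{c}_{j',i'}$ so the shifts in the second index of the $\vY$ variables line up.
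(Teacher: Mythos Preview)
Your overall strategy is exactly the paper's: expand $v(A_{i,k})$ and $\prod_{i'\in\phi^{-1}(i)}\vA_{i',k}^{\gamma_i}$ in the $\vY$ variables, use compatibility of $\virtual{\cc}$ with $\cc$ to align the shifts, and extract from the hypothesis $\widetilde\phi(\alpha_i)=\sum_{i'}\gamma_i\va_{i'}$ the numerical relation between the Cartan entries needed to match the exponents. The paper derives this relation by writing both sides of the hypothesis in the basis $\{\vL_{j'}\}$; your pairing with $\virtual{h}_{j'}$ is the same computation.

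However, there is a consistent computational slip that makes the argument fail as written. When you apply $v$ to the off-diagonal part of $A_{i,k}$ you get
\[
v\bigl(Y_{j,k+c_{ji}}^{C_{ji}}\bigr)=\prod_{j'\in\phi^{-1}(j)}\vY_{j',k+c_{ji}}^{\gamma_j C_{ji}},
\]
so the exponent of $\vY_{j',k+c_{ji}}$ in $v(A_{i,k})$ is $\gamma_j C_{ji}$, not $\gamma_i\gamma_j C_{ji}$; there is no mechanism for an extra factor $\gamma_i$ to appear. The same spurious $\gamma_i$ shows up in your evaluation of the left-hand side of the hypothesis: pairing $\widetilde\phi(\alpha_i)=\sum_{\ell}C_{\ell i}\gamma_\ell\sum_{\ell'\in\phi^{-1}(\ell)}\vL_{\ell'}$ with $\virtual{h}_{j'}$ gives $\gamma_j C_{ji}$, not $\gamma_i\gamma_j C_{ji}$. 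Consequently the identity you actually need (and the one the hypothesis actually yields, as in the paper's equation~\eqref{eq:lindep}) is
\[
\gamma_j C_{ji}=\gamma_i\sum_{i'\in\phi^{-1}(i)}\virtual{C}_{j'i'},
\]
not your stated $\sum_{i'}\virtual{C}_{j'i'}=\gamma_j C_{ji}$. The two agree only when $\gamma_i=1$; already in the paper's $C_2\hookrightarrow A_3$ example with $\gamma_1=2$, $\gamma_2=4$ your version fails. Once you remove the stray $\gamma_i$ from both places, the exponent comparison becomes $\gamma_j C_{ji}$ on the $v(A_{i,k})$ side versus $\gamma_i\sum_{i'}\virtual{C}_{j'i'}$ on the $\prod\vA$ side, and the corrected identity closes the argument exactly as you intended.
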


\begin{ex}Let
\begin{gather*}
C = \begin{pmatrix} 2 & -2 \\ -1 & 2 \end{pmatrix}, \!\!\!\qquad  \cc = \begin{pmatrix}\bullet &0\\1&\bullet\end{pmatrix},\!\!\! \qquad\text{and}\!\!\! \qquad  \virtual{C} = \begin{pmatrix} 2 & -1 & 0 \\ -1 & 2 & -1 \\ 0 & -1 & 2 \end{pmatrix}, \!\!\!\qquad \virtual{\cc} = \begin{pmatrix}\bullet & 0 & 0 \\ 1 & \bullet & 1 \\ 1 & 0 &  \bullet \end{pmatrix}.
\end{gather*}
Here, $C$ is a Cartan matrix of type $C_2$ and $\virtual{C}$ is a Cartan matrix of type $A_3$.  For arbitrary scaling factors $(\gamma_i \colon i\in I)$, the condition $\widetilde{\phi}(\alpha_i) = \sum\limits_{j\in \phi^{-1}(i)} \gamma_i\va_j$ imposes a linear dependence relation amongst the $\gamma_i$.  In this example, we have
\begin{gather*}
\widetilde{\phi}(\alpha_1)  = \widetilde{\phi}(2\Lambda_1 - \Lambda_2) = 2\widetilde{\phi}(\Lambda_1)- \widetilde{\phi}(\Lambda_2) = 2\gamma_1(\vL_1 + \vL_3) - \gamma_2\vL_2,\\
\gamma_1(\va_1+\va_3)  = \gamma_1(2\vL_1 - \vL_2 - \vL_2 + 2\vL_3) = 2\gamma_1(\vL_1 - \vL_2 + \vL_3),
\end{gather*}
so the relation $2\gamma_1 = \gamma_2$ is required to ensure $\widetilde{\phi}(\alpha_1) = \gamma_1(\va_1+\va_3)$.  Similarly, for $k\ge 0$,
\begin{gather*}
v(A_{1,k})  = v(Y_{1,k}) v(Y_{1,k+1}) \prod_{j\neq i} v(Y_{j,k+c_{j1}})^{C_{j1}} = \vY_{1,k}^{\gamma_1} \vY_{3,k}^{\gamma_1} \vY_{1,k+1}^{\gamma_1} \vY_{3,k+1}^{\gamma_1} \vY_{2,k+1}^{-\gamma_2}, \\
\prod_{i'\in\phi^{-1}(1)} \vA_{i',k}^{\gamma_i}  = \vA_{1,k}^{\gamma_1} \vA_{3,k}^{\gamma_1} = \vY_{1,k}^{\gamma_1} \vY_{1,k+1}^{\gamma_1} \vY_{2,k+1}^{-\gamma_1} \vY_{3,k}^{\gamma_1} \vY_{3,k+1}^{\gamma_1} \vY_{2,k+1}^{-\gamma_1}.
\end{gather*}
These two are equal if and only if $2\gamma_1 = \gamma_2$.
\end{ex}

\begin{proof}First, let's examine the condition that $\widetilde{\phi}(\alpha_i) = \sum\limits_{i'\in\phi^{-1}(i)}\gamma_i\va_{i'}$.  Recall that the Cartan matrix is the change of basis matrix from the basis of simple roots to the basis of fundamental weights.  So, on the one hand, for $i\in I$, we have
\begin{gather*}
\widetilde{\phi}(\alpha_i) = \widetilde{\phi}\bigg( \sum_{j\in I} C_{ji}\Lambda_j \bigg) = \sum_{j\in I} C_{ji} \widetilde{\phi}(\Lambda_j) = \sum_{j\in I} C_{ji} \gamma_j \sum_{j'\in\phi^{-1}(j)} \vL_{j'},
\end{gather*}
while, on the other hand, we have
\begin{gather*}
\widetilde\phi(\alpha_i) = \sum_{i'\in\phi^{-1}(i)} \gamma_i\va_{i'} = \sum_{i' \in \phi^{-1}(i)}\gamma_i \sum_{j'\in \virtual{I}} \virtual{C}_{j'i'}\vL_{j'} = \sum_{j'\in \virtual{I}} \sum_{i'\in\phi^{-1}(i)} \gamma_i \virtual{C}_{j'i'}\vL_{j'}.
\end{gather*}
Comparing the coefficient of $\vL_{j'}$ from both of these equations, we conclude, for all $i\in I$,
\begin{gather}\label{eq:lindep}
\gamma_jC_{ji} = \sum_{i'\in\phi^{-1}(i)} \gamma_i \virtual{C}_{j'i'} \qquad \text{for any $j$ such that} \quad j' \in \phi^{-1}(j).
\end{gather}

Let $i\in I$ and $k\ge 0$ be fixed.  Then
\begin{gather*}
v(A_{i,k}) = \prod_{i'\in\phi^{-1}(i)} \vY_{i',k}^{\gamma_i} \vY_{i',k+1}^{\gamma_i} \prod_{j\neq i}\prod_{j' \in \phi^{-1}(j)} \vY_{j',k+c_{ji}}^{\gamma_jC_{ji}}.
\end{gather*}
Since $\virtual{c}_{i'j'} = c_{ij}$ for all $i' \in \phi^{-1}(i)$ and $j' \in \phi^{-1}(j)$ such that $i\sim j$ and $i' \sim j'$, we have
\begin{gather*}
\prod_{i'\in\phi^{-1}(i)} \vY_{i',k}^{\gamma_i} \vY_{i',k+1}^{\gamma_i} \prod_{j\neq i}\prod_{j' \in \phi^{-1}(j)} \vY_{j',k+c_{ji}}^{\gamma_jC_{ji}} = \prod_{i'\in\phi^{-1}(i)} \vY_{i',k}^{\gamma_i} \vY_{i',k+1}^{\gamma_i} \prod_{j\neq i}\prod_{j' \in \phi^{-1}(j)} \vY_{j',k+\virtual{c}_{j'i'}}^{\gamma_jC_{ji}}.
\end{gather*}
By the calculation above,
\begin{gather*}
\prod_{i'\in\phi^{-1}(i)} \vY_{i',k}^{\gamma_i} \vY_{i',k+1}^{\gamma_i} \prod_{j\neq i}\prod_{j' \in \phi^{-1}(j)} \vY_{j',k+\virtual{c}_{j'i'}}^{\gamma_jC_{ji}} = \prod_{i'\in\phi^{-1}(i)} \vY_{i',k}^{\gamma_i} \vY_{i',k+1}^{\gamma_i} \prod_{j\neq i}\prod_{j' \in \phi^{-1}(j)} \vY_{j',k+\virtual{c}_{j'i'}}^{\gamma_i \virtual{C}_{j'i'}}.
\end{gather*}
On the other hand,
\begin{gather*}
\prod_{i'\in \phi^{-1}(i)} \vA_{i',k}^{\gamma_i} = \prod_{i'\in \phi^{-1}(i)} \vY_{i',k}^{\gamma_i} \vY_{i',k+1}^{\gamma_i} \prod_{j'\neq i'} \vY_{j',k+\virtual{c}_{j'i'}}^{\gamma_i \virtual{C}_{j'i'}},
\end{gather*}
which agrees with the calculation of $v(A_{i,k})$ by use of equation \eqref{eq:lindep} and the fact that $\virtual{C}_{j'i'} = 0$ if $i',j' \in \phi^{-1}(i)$.
\end{proof}

\begin{thm}\label{thm:monomial_virtualization} The map $v$ from Lemma~{\rm \ref{lem:AtoAhat}} is a virtualization map.
\end{thm}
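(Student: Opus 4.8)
The plan is to check directly that $v$ is injective and that it intertwines $\wt$, each $\varepsilon_i$ and $\varphi_i$ (up to the scaling factor $\gamma_i$), and the Kashiwara operators with their virtual counterparts; granting this, the image $V := v(\MM_\cc)$ (respectively $v(\MM(\infty)_\cc)$) inherits the virtual crystal structure of Definition~\ref{def:virtual} and $v$ is a $U_q(\g)$-crystal isomorphism onto $V$, which is precisely the assertion. Throughout we assume the hypothesis $\widetilde{\phi}(\alpha_i) = \sum_{i'\in\phi^{-1}(i)}\gamma_i\va_{i'}$ of Lemma~\ref{lem:AtoAhat}. Injectivity is immediate: the fibers $\phi^{-1}(i)$ are pairwise disjoint, so $v$ sends distinct monomials to distinct monomials.

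Write $M = \prod_{i\in I}\prod_{k}Y_{i,k}^{y_i(k)}$. For $i'\in\phi^{-1}(i)$ the exponent of $\vY_{i',k}$ in $v(M)$ is $\gamma_i y_i(k)$, so the partial sums $\widehat S_k := \sum_{j\le k}\gamma_i y_i(j)$ that govern $\virtual{\varphi}_{i'}$ and $\virtual{\varepsilon}_{i'}$ at $v(M)$ equal $\gamma_i$ times the partial sums $S_k := \sum_{j\le k}y_i(j)$ that govern $\varphi_i$ and $\varepsilon_i$ at $M$. The weight identity $\virtual{\wt}(v(M)) = \widetilde{\phi}(\wt M)$ follows at once from~\eqref{eq:weight_embedding}. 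Since $\gamma_i > 0$, taking maxima gives $\virtual{\varphi}_{i'}(v(M)) = \gamma_i\varphi_i(M)$, and using the alternative description $\varepsilon_i(M) = \max\{-\sum_{j>k}y_i(j)\colon k\}$ from the Remark gives $\virtual{\varepsilon}_{i'}(v(M)) = \gamma_i\varepsilon_i(M)$; both are independent of the choice of $i'\in\phi^{-1}(i)$, so $\varepsilon_i^v$, $\varphi_i^v$, $\wt^v$ are well defined on $V$ and restrict along $v$ to $\varepsilon_i$, $\varphi_i$, $\wt$.

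The crux is to prove $v(f_i M) = f^v_i(v(M)) = \prod_{i'\in\phi^{-1}(i)}\virtual{f}_{i'}^{\,\gamma_i}(v(M))$, and symmetrically for $e_i$. If $\varphi_i(M) = 0$ both sides vanish; otherwise Lemma~\ref{lem:AtoAhat} gives $v(f_i M) = v(M)\prod_{i'\in\phi^{-1}(i)}\vA_{i',k_i^f}^{-\gamma_i}$, so it suffices to show $\virtual{f}_{i'}^{\,\gamma_i}(v(M)) = v(M)\,\vA_{i',k_i^f}^{-\gamma_i}$ for each fixed $i'\in\phi^{-1}(i)$. Indeed the operators $\virtual{f}_{i'}$ for distinct $i'\in\phi^{-1}(i)$ commute, and since $\virtual{C}_{i_1'i_2'} = 0$ for distinct $i_1', i_2'\in\phi^{-1}(i)$ the factor $\vA_{i',k}^{-1}$ leaves every other $i''$-partial sum ($i''\in\phi^{-1}(i)$) unchanged, so the single-node identities will combine to give the claim. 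For a single node $i'$, the $i'$-partial sums of $v(M)$ are $\gamma_i S_k$, so $\virtual{f}_{i'}$ first acts with the index $k_i^f$; multiplying by $\vA_{i',k_i^f}^{-1}$ lowers the $i'$-partial sums by $1$ at $k_i^f$, by $2$ at each position above $k_i^f$, and not at all below. Using $S_m \le \varphi_i(M) - 1$ for $m < k_i^f$, $S_m \le \varphi_i(M)$ for $m > k_i^f$, and $\gamma_i \ge 1$, an induction on $t$ will show that after $t$ applications, for $0 \le t \le \gamma_i - 1$, the $i'$-partial sums have maximum value $\gamma_i\varphi_i(M) - t$, attained leftmost exactly at $k_i^f$, while $\virtual{\varphi}_{i'}$ stays positive; hence each of the $\gamma_i$ successive applications of $\virtual{f}_{i'}$ uses the index $k_i^f$, giving $\virtual{f}_{i'}^{\,\gamma_i}(v(M)) = v(M)\vA_{i',k_i^f}^{-\gamma_i}$. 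The $e_i$ case is the mirror image, using the rightmost maximizer $k_i^e$ and the alternative description of $\varepsilon_i$ to track that $\virtual{\varepsilon}_{i'}$ stays positive along the chain. For $\MM(\infty)_\cc$ the identical argument applies with the modified operators in place of the ordinary ones and all indices restricted to $k \ge 0$; this causes no difficulty because every element of $\MM(\infty)_\cc$ has the form~\eqref{eq:Minfge0} and $v$ preserves that shape.

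Putting these facts together, $V$ is stable under $e^v_i$ and $f^v_i$, so $(V, \virtual{\MM}_{\virtual{\cc}}, \phi, \gamma)$ is a virtual crystal, and $v$ is a bijection onto $V$ intertwining the Kashiwara operators and matching $\varepsilon_i$, $\varphi_i$, $\wt$ with $\varepsilon_i^v$, $\varphi_i^v$, $\wt^v$ — that is, a $U_q(\g)$-crystal isomorphism; hence $v$ is a virtualization map. I expect the only genuinely non-routine step to be the inductive ``profile'' argument showing that all $\gamma_i$ consecutive applications of $\virtual{f}_{i'}$ (and of $\virtual{e}_{i'}$) use one and the same index; everything else is either immediate from the definitions or already supplied by Lemma~\ref{lem:AtoAhat}.
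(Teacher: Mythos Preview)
Your proof is correct and follows the same approach as the paper's: check that $v$ intertwines $\wt$, $\varphi_i$, $\varepsilon_i$, and then use Lemma~\ref{lem:AtoAhat} to reduce the intertwining of the Kashiwara operators to verifying that the index $k_i^f$ (resp.\ $k_i^e$) is the same on both sides. You are in fact more careful than the paper on the one point you flag as non-routine: the paper only checks that the \emph{first} application of $\virtual{f}_{i'}$ to $v(M)$ uses the index $k_i^f$ and does not explicitly justify that all $\gamma_i$ successive applications do, whereas your inductive profile argument (tracking how multiplying by $\vA_{i',k_i^f}^{-1}$ shifts the $i'$-partial sums) closes that gap.
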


\begin{proof}We show the case for $\MM(\infty)_{\cc}$ as the case for $\MM_{\cc}$ is similar. By equation \eqref{eq:Minfge0}, write
\begin{gather*}
M = \prod_{i\in I} \prod_{k\ge 0} Y_{i,k}^{y_i(k)}\qquad \text{and} \qquad \virtual{M} := v(M) = \prod_{i\in I} \prod_{k\ge 0} \prod_{i' \in \phi^{-1}(i)} \vY_{i',k}^{\gamma_iy_i(k)}.
\end{gather*}
We need to show $\wt(M) = \wt^v(\virtual{M})$, $v(e_iM) = e_i^v \virtual{M}$, and $v(f_iM) = f_i^v \virtual{M}$ for all $i \in I$. First
\begin{gather*}
\widetilde{\phi}\bigl( \wt(M) \bigr) = \widetilde{\phi}\biggl( \sum_{i\in I} \bigg( \sum_{k\ge 0} y_i(k) \bigg) \Lambda_i \biggr)
= \sum_{i\in I} \bigg( \sum_{k\ge 0} y_i(k) \bigg) \sum_{i'\in \phi^{-1}(i)} \gamma_i \vL_{i'} = \virtual{\wt}(\virtual{M}),
\end{gather*}
so $v$ commutes with the weight map.

Now fix $i\in I$ and suppose $k_f \ge0$ is such that $f_iM = MA^{-1}_{i,k_f}$. We need to show that $f^v_i\virtual{M} = \virtual{M}\prod\limits_{i'\in\phi^{-1}(i)}\vA_{i',k_f}^{-\gamma_i}$, but by Lemma~\ref{lem:AtoAhat} it suffices to show, for any $i' \in \phi^{-1}(i)$,
\begin{gather*}
k_f = \min\bigg\{ k\ge 0\colon \virtual{\varphi}_{i'}(\virtual{M}) = \sum_{0\le \ell \le k} \gamma_iy_{i}(\ell) \bigg\} .
\end{gather*}
By definition of $\virtual{\varphi}_{i'}$ and the construction of $\virtual{M}$, for all $i'\in\phi^{-1}(i)$ we have
\begin{gather*}
\virtual{\varphi}_{i'}(\virtual{M}) = \max\bigg\{ \sum_{0\le r \le s} \gamma_i y_i(r)\colon s \ge 0 \bigg\} = \gamma_i\max\bigg\{ \sum_{0\le r \le s} y_i(r) \colon s \ge 0 \bigg\} = \gamma_i \varphi_i(M).
\end{gather*}
Hence $\varphi_i^v(M) = \virtual{\varphi}_{i'}(\virtual{M})/\gamma_i = \varphi_i(M)$ for all $i\in I$.  Now
\begin{align*}
\min\bigg\{ k\ge 0 \colon \virtual{\varphi}_{i'}(\virtual{M}) = \sum_{0\le \ell \le k} \gamma_iy_{i}(\ell) \bigg\} &= \min\bigg\{ k\ge 0 \colon \varphi_i^v(M) = \sum_{0\le \ell \le k} y_{i}(\ell) \bigg\} \\
&= \min\bigg\{ k\ge 0 \colon \varphi_i(M) = \sum_{0\le \ell \le k} y_{i}(\ell) \bigg\} \\
& = k_f,
\end{align*}
as required.
The case showing $e_i^v\virtual{M} = v(e_iM)$ is similar.
\end{proof}

\begin{ex}We construct the highest weight $F_4$ crystal $B(\Lambda_1)$ inside of $B(\vL_2)$ of type $E_6$ under the folding defined by
\[
\begin{tikzpicture}[xscale=1.75,yscale=.8]
\node at (0,1) {$E_6$};
\node[dynkdot,label={above:$2$}] (E2) at (1,1) {};
\node[dynkdot,label={above:$4$}] (E4) at (2,1) {};
\node[dynkdot,label={above:$5$}] (E5) at (3,1.5) {};
\node[dynkdot,label={above:$6$}] (E6) at (4,1.5) {};
\node[dynkdot,label={above:$3$}] (E3) at (3,0.5) {};
\node[dynkdot,label={above:$1$}] (E1) at (4,0.5) {};
\path[-]
 (E2) edge (E4)
 (E4) edge (E5)
 (E4) edge (E3)
 (E5) edge (E6)
 (E3) edge (E1);

\def\Foffset{-1}
\node at (0,\Foffset) {$F_4$};
\foreach \x in {1,2,3,4}
{\node[dynkdot,label={below:$\x$}] (F\x) at (\x,\Foffset) {};}
\draw[-] (F1.east) -- (F2.west);
\draw[-] (F3) -- (F4);
\draw[-] (F2.30) -- (F3.150);
\draw[-] (F2.330) -- (F3.210);
\draw[-] (2.55,\Foffset) -- (2.45,\Foffset+.1);
\draw[-] (2.55,\Foffset) -- (2.45,\Foffset-.1);

\path[-latex,dashed,color=blue,thick]
 (E2) edge (F1)
 (E4) edge (F2);
\draw[-latex,dashed,color=blue,thick]
 (E1) .. controls (3.75,\Foffset+1) and (3.75,\Foffset+.5) .. (F4);
\draw[-latex,dashed,color=blue,thick]
 (E3) .. controls (3.25,\Foffset+1) and (3.25,\Foffset+.5) .. (F3);
\draw[-latex,dashed,color=blue,thick]
 (E5) .. controls (2.25,\Foffset+1) and (2.75,\Foffset+.5) .. (F3);
\draw[-latex,dashed,color=blue,thick]
 (E6) .. controls (4.75,\Foffset+1) and (4.25,\Foffset+.5) .. (F4);

\end{tikzpicture}
\]
Using the following code, the virtualization map can be given on a vertex-by-vertex basis.  See Table~\ref{F4toE6}.
\begin{lstlisting}
sage: LaE = RootSystem(['E',6]).weight_lattice().fundamental_weights()
sage: LaF = RootSystem(['F',4]).weight_lattice().fundamental_weights()
sage: new_c = matrix([[0,1,1,1,1,1],[0,0,1,0,1,1],[0,0,0,1,1,1],
....:                 [0,1,0,0,0,1],[0,0,0,1,0,0],[0,0,0,0,1,0]])
sage: ME = crystals.NakajimaMonomials(LaE[2], c=new_c)
sage: MF = crystals.NakajimaMonomials(LaF[4])
sage: phi = {1: [1,6], 2: [3,5], 3: [4], 4: [2]}
sage: sf = {i: 1 for i in MF.index_set()}
sage: v = MF.crystal_morphism(ME.module_generators, virtualization=phi,
....:                         scaling_factors=sf)
\end{lstlisting}

\begin{table}[t]
\begin{align*}
Y_{4,0}  & \mapsto Y_{2,0} &
Y_{3,1} Y_{4,1}^{-1}  & \mapsto Y_{2,1}^{-1} Y_{4,1} \\
Y_{2,2} Y_{3,2}^{-1}  & \mapsto Y_{3,2} Y_{4,2}^{-1} Y_{5,2} &
Y_{1,3} Y_{2,3}^{-1} Y_{3,2}  & \mapsto Y_{1,3} Y_{3,3}^{-1} Y_{4,2} Y_{5,3}^{-1} Y_{6,3} \\
Y_{1,4}^{-1} Y_{3,2}  & \mapsto Y_{1,4}^{-1} Y_{4,2} Y_{6,4}^{-1} &
Y_{1,3} Y_{3,3}^{-1} Y_{4,2}  & \mapsto Y_{1,3} Y_{2,2} Y_{4,3}^{-1} Y_{6,3} \\
Y_{1,4}^{-1} Y_{2,3} Y_{3,3}^{-1} Y_{4,2}  & \mapsto Y_{1,4}^{-1} Y_{2,2} Y_{3,3} Y_{4,3}^{-1} Y_{5,3} Y_{6,4}^{-1} &
Y_{2,4}^{-1} Y_{3,3} Y_{4,2}  & \mapsto Y_{2,2} Y_{3,4}^{-1} Y_{4,3} Y_{5,4}^{-1} \\
Y_{3,4}^{-1} Y_{4,2} Y_{4,3}  & \mapsto Y_{2,2} Y_{2,3} Y_{4,4}^{-1} &
Y_{4,2} Y_{4,4}^{-1}  & \mapsto Y_{2,2} Y_{2,4}^{-1} \\
Y_{3,3} Y_{4,3}^{-1} Y_{4,4}^{-1}  & \mapsto Y_{2,3}^{-1} Y_{2,4}^{-1} Y_{4,3} &
Y_{2,4} Y_{3,4}^{-1} Y_{4,4}^{-1}  & \mapsto Y_{2,4}^{-1} Y_{3,4} Y_{4,4}^{-1} Y_{5,4} \\
Y_{1,5} Y_{2,5}^{-1} Y_{3,4} Y_{4,4}^{-1}  & \mapsto Y_{1,5} Y_{2,4}^{-1} Y_{3,5}^{-1} Y_{4,4} Y_{5,5}^{-1} Y_{6,5} &
Y_{1,6}^{-1} Y_{3,4} Y_{4,4}^{-1}  & \mapsto Y_{1,6}^{-1} Y_{2,4}^{-1} Y_{4,4} Y_{6,6}^{-1} \\
Y_{1,5} Y_{3,5}^{-1}  & \mapsto Y_{1,5} Y_{4,5}^{-1} Y_{6,5} &
Y_{1,6}^{-1} Y_{2,5} Y_{3,5}^{-1}  & \mapsto Y_{1,6}^{-1} Y_{3,5} Y_{4,5}^{-1} Y_{5,5} Y_{6,6}^{-1} \\
Y_{2,6}^{-1} Y_{3,5}  & \mapsto Y_{3,6}^{-1} Y_{4,5} Y_{5,6}^{-1} &
Y_{3,6}^{-1} Y_{4,5}  & \mapsto Y_{2,5} Y_{4,6}^{-1} \\
Y_{4,6}^{-1}  & \mapsto Y_{2,6}^{-1} &
Y_{1,3} Y_{4,3}^{-1}  & \mapsto Y_{1,3} Y_{2,3}^{-1} Y_{6,3} \\
Y_{1,4}^{-1} Y_{2,3} Y_{4,3}^{-1}  & \mapsto Y_{1,4}^{-1} Y_{2,3}^{-1} Y_{3,3} Y_{5,3} Y_{6,4}^{-1} &
Y_{2,4}^{-1} Y_{3,3}^{2} Y_{4,3}^{-1}  & \mapsto Y_{2,3}^{-1} Y_{3,4}^{-1} Y_{4,3}^{2} Y_{5,4}^{-1} \\
Y_{3,3} Y_{3,4}^{-1}  & \mapsto Y_{4,3} Y_{4,4}^{-1} &
Y_{2,4} Y_{3,4}^{-2} Y_{4,3}  & \mapsto Y_{2,3} Y_{3,4} Y_{4,4}^{-2} Y_{5,4} \\
Y_{1,5} Y_{2,5}^{-1} Y_{4,3}  & \mapsto Y_{1,5} Y_{2,3} Y_{3,5}^{-1} Y_{5,5}^{-1} Y_{6,5} &
Y_{1,6}^{-1} Y_{4,3}  & \mapsto Y_{1,6}^{-1} Y_{2,3} Y_{6,6}^{-1}
\end{align*}
\caption{The explicit virtualization map from $B(\Lambda_1)$ of type $F_4$ to $B(\virtual{\Lambda}_2)$ of type $E_6$.}\label{F4toE6}
\end{table}
\end{ex}

\begin{ex}
We consider the folding of type $D_5^{(1)}$ onto $C_3^{(1)}$ given by
\[
\begin{tikzpicture}[xscale=1.75,yscale=1.25]
\node at (0,0) {$D_5^{(1)}$};
\node[dynkdot,label={left:$0$}] (D0) at (1,.5) {};
\node[dynkdot,label={left:$1$}] (D1) at (1,-.5){};
\node[dynkdot,label={above:$2$}] (D2) at (2,0) {};
\node[dynkdot,label={above:$3$}] (D3) at (3,0) {};
\node[dynkdot,label={right:$5$}] (D5) at (4,.5) {};
\node[dynkdot,label={right:$4$}] (D4) at (4,-.5) {};
\draw[-] (D0) -- (D2);
\draw[-] (D1) -- (D2);
\draw[-] (D2) -- (D3);
\draw[-] (D3) -- (D4);
\draw[-] (D3) -- (D5);

\def\Coffset{-1.5}
\node at (0,\Coffset) {$C_3^{(1)}$};
\foreach \x in {0,1,2,3}
{\node[dynkdot,label={below:$\x$}] (C\x) at (\x+1,\Coffset) {};}
\draw[-] (C0.30) -- (C1.150);
\draw[-] (C0.330) -- (C1.210);
\draw[-] (C1) -- (C2);
\draw[-] (C2.30) -- (C3.150);
\draw[-] (C2.330) -- (C3.210);
\draw[-] (3.45,\Coffset) -- (3.55,\Coffset+.1);
\draw[-] (3.45,\Coffset) -- (3.55,\Coffset-.1);
\draw[-] (1.55,\Coffset) -- (1.45,\Coffset+.1);
\draw[-] (1.55,\Coffset) -- (1.45,\Coffset-.1);

\path[-latex,dashed,color=blue,thick]
 (D1) edge (C0)
 (D2) edge (C1)
 (D3) edge (C2)
 (D4) edge (C3);
\draw[-latex,dashed,color=blue,thick]
 (D0) .. controls (0.25,-.5) and (0.25,-1) .. (C0);
\draw[-latex,dashed,color=blue,thick]
 (D5) .. controls (4.75,-.5) and (4.75,-1) .. (C3);
\end{tikzpicture}
\]
We embed $B(\Lambda_0)$ into $B(\vL_0 + \vL_1)$ and construct the virtualization map $v$ by
\begin{lstlisting}
sage: PC = RootSystem(['C',3,1]).weight_lattice(extended=True)
sage: LaC = PC.fundamental_weights()
sage: PD = RootSystem(['D',5,1]).weight_lattice(extended=True)
sage: LaD = PD.fundamental_weights()
sage: MC = crystals.NakajimaMonomials(LaC[0])
sage: MD = crystals.NakajimaMonomials(LaD[0]+LaD[1])
sage: sf = {i: 1 for i in MC.index_set()}
sage: phi = {0: [0,1], 1: [2], 2: [3], 3: [4,5]}
sage: v = MC.crystal_morphism(MD.module_generators, virtualization=phi,
....:                         scaling_factors=sf)
\end{lstlisting}
The resulting first four levels of $B(\Lambda_0)$ and its image under $v$ are given in Fig.~\ref{fig:CD_example}.
\end{ex}

\begin{figure}[t]
\[
\begin{tikzpicture}[>=latex,line join=bevel,xscale=0.8,every node/.style={scale=0.75},yscale=0.5]
\node (node_7) at (124.5bp,9.5bp) [draw,draw=none] {$Y_{0,1} Y_{1,1}^{-1} Y_{2,0} Y_{2,1}^{-1} Y_{3,0} $};
  \node (node_6) at (165.5bp,82.5bp) [draw,draw=none] {$Y_{0,1} Y_{1,1}^{-2} Y_{2,0}^{2} $};
  \node (node_5) at (117.5bp,300.0bp) [draw,draw=none] {$Y_{0,0} $};
  \node (node_4) at (117.5bp,155.5bp) [draw,draw=none] {$Y_{1,0} Y_{1,1}^{-1} Y_{2,0} $};
  \node (node_3) at (29.5bp,9.5bp) [draw,draw=none] {$Y_{1,0} Y_{2,1} Y_{3,1}^{-1} $};
  \node (node_2) at (210.5bp,9.5bp) [draw,draw=none] {$Y_{0,2}^{-1} Y_{2,0}^{2} $};
  \node (node_1) at (117.5bp,228.5bp) [draw,draw=none] {$Y_{0,1}^{-1} Y_{1,0}^{2} $};
  \node (node_0) at (88.5bp,82.5bp) [draw,draw=none] {$Y_{1,0} Y_{2,1}^{-1} Y_{3,0} $};
  \draw [green,->] (node_0) ..controls (71.888bp,61.509bp) and (55.334bp,41.589bp)  .. (node_3);
  \definecolor{strokecol}{rgb}{0.0,0.0,0.0};
  \pgfsetstrokecolor{strokecol}
  \draw (75.0bp,46.0bp) node {$3$};
  \draw [blue,->] (node_1) ..controls (117.5bp,208.04bp) and (117.5bp,189.45bp)  .. (node_4);
  \draw (126.0bp,192.0bp) node {$1$};
  \draw [black,->] (node_6) ..controls (178.04bp,61.722bp) and (190.32bp,42.337bp)  .. (node_2);
  \draw (202.0bp,46.0bp) node {$0$};
  \draw [red,->] (node_6) ..controls (154.08bp,61.722bp) and (142.88bp,42.337bp)  .. (node_7);
  \draw (159.0bp,46.0bp) node {$2$};
  \draw [blue,->] (node_4) ..controls (130.94bp,134.62bp) and (144.23bp,114.96bp)  .. (node_6);
  \draw (156.0bp,119.0bp) node {$1$};
  \draw [black,->] (node_5) ..controls (117.5bp,281.64bp) and (117.5bp,262.65bp)  .. (node_1);
  \draw (126.0bp,265.0bp) node {$0$};
  \draw [blue,->] (node_0) ..controls (95.813bp,63.745bp) and (102.13bp,49.153bp)  .. (108.5bp,37.0bp) .. controls (110.05bp,34.049bp) and (111.78bp,30.973bp)  .. (node_7);
  \draw (117.0bp,46.0bp) node {$1$};
  \draw [red,->] (node_4) ..controls (109.47bp,134.83bp) and (101.66bp,115.71bp)  .. (node_0);
  \draw (115.0bp,119.0bp) node {$2$};
\end{tikzpicture}
\qquad
\begin{tikzpicture}[>=latex,line join=bevel,xscale=0.69,every node/.style={scale=0.75},yscale=0.5]
\node (node_7) at (134.5bp,300.0bp) [draw,draw=none] {$Y_{0,0} Y_{1,0} $};
  \node (node_6) at (134.5bp,155.5bp) [draw,draw=none] {$Y_{2,0} Y_{2,1}^{-1} Y_{3,0} $};
  \node (node_5) at (262.5bp,9.5bp) [draw,draw=none] {$Y_{2,0} Y_{3,1} Y_{4,1}^{-1} Y_{5,1}^{-1} $};
  \node (node_4) at (134.5bp,228.5bp) [draw,draw=none] {$Y_{0,1}^{-1} Y_{1,1}^{-1} Y_{2,0}^{2} $};
  \node (node_3) at (194.5bp,82.5bp) [draw,draw=none] {$Y_{2,0} Y_{3,1}^{-1} Y_{4,0} Y_{5,0} $};
  \node (node_2) at (142.5bp,9.5bp) [draw,draw=none] {$Y_{0,1} Y_{1,1} Y_{2,1}^{-1} Y_{3,0} Y_{3,1}^{-1} Y_{4,0} Y_{5,0} $};
  \node (node_1) at (30.5bp,9.5bp) [draw,draw=none] {$Y_{0,2}^{-1} Y_{1,2}^{-1} Y_{3,0}^{2} $};
  \node (node_0) at (101.5bp,82.5bp) [draw,draw=none] {$Y_{0,1} Y_{1,1} Y_{2,1}^{-2} Y_{3,0}^{2} $};
  \draw [red,->] (node_0) ..controls (112.92bp,61.722bp) and (124.12bp,42.337bp)  .. (node_2);
  \definecolor{strokecol}{rgb}{0.0,0.0,0.0};
  \pgfsetstrokecolor{strokecol}
  \draw (136.0bp,46.0bp) node {$2$};
  \draw [blue,->] (node_6) ..controls (125.36bp,134.83bp) and (116.47bp,115.71bp)  .. (node_0);
  \draw (131.0bp,119.0bp) node {$1$};
  \draw [blue,->] (node_3) ..controls (179.94bp,61.616bp) and (165.54bp,41.964bp)  .. (node_2);
  \draw (184.0bp,46.0bp) node {$1$};
  \draw [black,->] (node_7) ..controls (134.5bp,281.64bp) and (134.5bp,262.65bp)  .. (node_4);
  \draw (143.0bp,265.0bp) node {$0$};
  \draw [blue,->] (node_4) ..controls (134.5bp,208.04bp) and (134.5bp,189.45bp)  .. (node_6);
  \draw (143.0bp,192.0bp) node {$1$};
  \draw [green,->] (node_3) ..controls (213.85bp,61.296bp) and (233.45bp,40.834bp)  .. (node_5);
  \draw (246.0bp,46.0bp) node {$3$};
  \draw [black,->] (node_0) ..controls (81.296bp,61.296bp) and (60.834bp,40.834bp)  .. (node_1);
  \draw (83.0bp,46.0bp) node {$0$};
  \draw [red,->] (node_6) ..controls (151.39bp,134.51bp) and (168.23bp,114.59bp)  .. (node_3);
  \draw (181.0bp,119.0bp) node {$2$};
\end{tikzpicture}
\]
\caption{The first four levels of $B(\Lambda_0)$ of type $C_3^{(1)}$ (left) and the image under the virtualization map with $D_5^{(1)}$ (right).}
\label{fig:CD_example}
\end{figure}
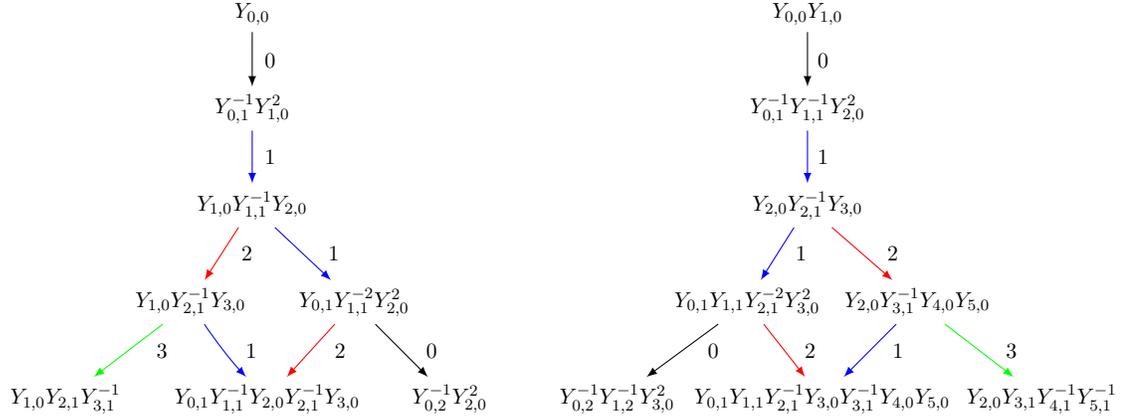

\begin{Remark}
Consider an array $\cc$ for $\MM(\infty)_{\cc}$ such that there exists a total order $\prec$ on $I$ such that $i \prec j$ whenever $c_{ij} = 1$ and $i \sim j$. Let $I_{\prec} := \{i_1 \prec i_2 \prec \cdots \prec i_n\}$. From~\cite{K93}, there exists a~(strict) embedding
\begin{gather}\label{eq:I_path}
B(\infty) \lhook\joinrel\longrightarrow \cdots \otimes \BB_{i_2} \otimes \BB_{i_1} \otimes \BB_{i_n} \otimes \dotsm \otimes \BB_{i_2} \otimes \BB_{i_1},
\end{gather}
where $\BB_i = \{b_i(a) \colon a \in \ZZ\}$ is the elementary $i$-crystal. From the tensor product rule, the image of the embedding~\eqref{eq:I_path} is isomorphic to $\MM(\infty)_{\cc}$ under the map
\begin{gather*}
\prod_{\substack{ i \in I \\ k \in \ZZ_{\geq 0} }}\!\! A_{i,k}^{a_i(k)}\! \mapsto \cdots \otimes b_{i_2}\bigl(a_{i_2}(1)\bigr) \otimes b_{i_1}\bigl(a_{i_1}(1)\bigr) \otimes b_{i_n}\bigl(a_{i_n}(0)\bigr) \otimes \dotsm \otimes b_{i_2}\bigl(a_{i_2}(0)\bigr) \otimes b_{i_1}\bigl(a_{i_1}(0)\bigr),
\end{gather*}
which can be considered a special case of the isomorphism to prove~\cite[Theorem~3.1]{KKS07}. In other words, the values $k_i^e$ and $k_i^f$ determine which tensor factor to act upon. Furthermore, the proof of~\cite[Theorem~5.1]{K96} is a special case of the proof presented here.
\end{Remark}

\section{Virtualization of Kirillov--Reshetikhin crystals}\label{sec:KR_crystals}

We first recall the definition of Nakajima monomials given by Nakajima~\cite{Nakajima01,Nakajima03II,Nakajima03,Nakajima04}. This definition of Nakajima monomials is necessary in order to invoke the results of \cite{HN06}, and this modification is only necessary for this section.  In this case we have $c_{ij} = 1$ for all $i,j \in I$ and instead use
\begin{gather*}
A'_{i,k} = Y_{i,k} Y_{i,k+2} \prod_{j\neq i} Y_{j,k+c_{ji}}^{C_{ji}}
\end{gather*}
in place of $A_{i,k}$ in~\eqref{eq:mon_cry_ops}. In particular, Nakajima showed that if there are no odd length cycles in the Dynkin diagram, then we have an abstract $U_q(\g)$-crystal structure and the analog of Theorem~\ref{thm:highest_weight_model}.\footnote{For type $A_n^{(1)}$, a common generalization of these two models was made in~\cite{ST14}.}
We note that this restricted set of Cartan types includes all finite and affine types except $A_{2k}^{(1)}$.

We note that by using Nakajima's crystal structure, the compatibility condition $\virtual{\cc}$ and $\cc$ is immediately satisfied. Furthermore, it is straightforward to verify that Lemma~\ref{lem:AtoAhat} and Theorem~\ref{thm:monomial_virtualization} hold in this setting.

Now, let us consider $\g$ of affine type and $\lambda$ be a \defn{level-zero dominant weight}; that is to say, we have $\lambda = \sum\limits_{i=1}^N m_i (\Lambda_{r_i} - c_{r_i} \Lambda_0)$ for $m_i \in \ZZ_{\geq 0}$, where the positive integers $c_{r_i}$ are such that $\Lambda_{r_i} - c_{r_i}\Lambda_0$ has level zero \cite[Chapter~12]{kac90}. We consider an extremal level-zero crystal $B(\lambda)$ coming from an extremal level-zero module (see \cite[Section~3]{Kashiwara02} for more information). Kashiwara showed that the crystal exists in~\cite{K94}, and there exists an automorphism $\eta$ of $B(\lambda)$ such that
\begin{gather*}
B(\lambda) / \eta \iso \bigotimes_{i=1}^N(B^{r_i, 1})^{\otimes m_i},
\end{gather*}
where $B^{r,s}$ are certain finite crystals, in~\cite{Kashiwara02}. These finite crystals were later realized to be \defn{Kirillov--Reshetikhin $($KR$)$ crystals}, the crystal basis corresponding to Kirillov--Reshetikhin modu\-les~\cite{HKOTT02, HKOTY99, OS08}.

In~\cite{HN06}, a construction of the extremal level-zero crystal $B(\lambda)$ was given in terms of Nakajima monomials as the closure of certain monomials and the automorphism $\eta$. In particular, a power of the automorphism $\eta$ is a shift map $\tau_s(Y_{i,k}) = Y_{i,k+s}$. It is straightforward to see that the defining monomials, the power of the automorphism, and the shift all agree. Therefore, we obtain the analog results to~\cite[Theorem~4.2]{PS15} and partially~\cite{NS01, NS05II, NS10} of a virtualization of KR crystals by using Nakajima monomials.

\begin{thm}
Let $\g$ be of affine type. Then $v \circ \eta = \eta \circ v$.
Moreover, the KR crystal $B^{r,1}$ virtualizes in $\bigotimes_{r' \in \phi^{-1}(r)} (B^{r',1})^{\otimes \gamma_r}$.
\end{thm}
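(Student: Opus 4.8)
The plan is to prove the two assertions separately, using Theorem~\ref{thm:monomial_virtualization}---valid, as noted above, for Nakajima's crystal structure with the operators $A'_{i,k}$---together with the monomial realization of the extremal level-zero crystal $B(\lambda)$ from~\cite{HN06}. I would first record two elementary facts about $v$. Being the virtualization map of Theorem~\ref{thm:monomial_virtualization}, $v$ is injective, intertwines $e_i,f_i$ with $e_i^v,f_i^v$ for every $i\in I$, and satisfies $\widetilde\phi\circ\wt = \virtual{\wt}\circ v$. Moreover, $v$ commutes with every shift map: from $Y_{i,k}\mapsto\prod_{i'\in\phi^{-1}(i)}\vY_{i',k}^{\gamma_i}$ it is immediate that applying $\tau_s$ before or after $v$ just replaces each index $k$ by $k+s$, so $v(\tau_s M)=\tau_s\bigl(v(M)\bigr)$.

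Next I would establish $v\circ\eta=\eta\circ v$, where $\eta$ on the right denotes the automorphism of $B(\virtual{\lambda})$ with $\virtual{\lambda}:=\widetilde\phi(\lambda)$. By~\cite{HN06}, $B(\lambda)$ is the closure of an explicit finite family of defining monomials under the Kashiwara operators and $\eta$, and a fixed power $\eta^p$ equals the shift $\tau_s$; thus $\eta$ is pinned down by three pieces of data---the defining monomials, the integer $p$, and the shift $\tau_s$. Using $\widetilde\phi\circ\wt=\virtual{\wt}\circ v$ and the explicit form of these monomials, I would check that $v$ sends the defining monomials of $B(\lambda)$ to those of $B(\virtual{\lambda})$ and that the values of $p$ and of $s$ agree on the $\g$-side and on the $\vg$-side. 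Combined with the two facts about $v$ above---so that $v$ respects the Kashiwara operators and the shift used to generate the two crystals---this forces $v\circ\eta=\eta\circ v$.

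Granting this, the virtualization of $B^{r,1}$ follows by descending to the $\eta$-quotient. Taking $\lambda=\Lambda_r-c_r\Lambda_0$, the weight embedding~\eqref{eq:weight_embedding} shows $\virtual{\lambda}=\widetilde\phi(\lambda)$ is again a level-zero dominant weight, and comparing the defining monomials of~\cite{HN06} (as in~\cite[Theorem~4.2]{PS15}) gives $B(\virtual{\lambda})/\eta\iso\bigotimes_{r'\in\phi^{-1}(r)}(B^{r',1})^{\otimes\gamma_r}$. Since $v\circ\eta=\eta\circ v$, the map $v\colon B(\lambda)\longrightarrow B(\virtual{\lambda})$ descends to $\overline v\colon B^{r,1}=B(\lambda)/\eta\longrightarrow B(\virtual{\lambda})/\eta=\bigotimes_{r'\in\phi^{-1}(r)}(B^{r',1})^{\otimes\gamma_r}$. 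Its injectivity follows from that of $v$ together with $v\circ\eta=\eta\circ v$ (so $v(x)$ and $v(y)$ lie in one $\eta$-orbit only when $x$ and $y$ do), and the remaining virtual-crystal axioms for $\overline v$---intertwining $e_i,f_i$ with $e_i^v,f_i^v$ and rescaling weights by $\widetilde\phi$---pass to the quotients from the corresponding properties of $v$, since the $U_q'(\g)$-crystal structure and $\widetilde\phi$ both descend modulo $\eta$. This yields the claim.

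I expect the main obstacle to be the verification, inside the proof of $v\circ\eta=\eta\circ v$, that the defining monomials, the power $p$, and the shift $\tau_s$ all match under $v$. Unlike $e_i^v$, $f_i^v$ and the shift maps, $\eta$ admits no uniform closed form on monomials, so this step requires entering the type-dependent combinatorics of~\cite{HN06} and checking that the folding data $\phi$ and $(\gamma_i)$ are compatible with it---which is precisely why only ``certain $r$ depending on the type'' occur in the statement. Everything else (the two elementary facts about $v$ and the descent to the $\eta$-quotient) is routine once this compatibility is in place.
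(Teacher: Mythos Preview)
Your proposal is correct and follows essentially the same approach as the paper. The paper's argument, given in the paragraph immediately preceding the theorem rather than as a formal proof, consists precisely of your three ingredients: the Hernandez--Nakajima realization of $B(\lambda)$ as the closure of certain defining monomials under the Kashiwara operators and $\eta$, the fact that a power of $\eta$ is a shift $\tau_s$, and the (asserted to be ``straightforward'') verification that the defining monomials, the power, and the shift all agree under $v$; your elaboration of the descent to the $\eta$-quotient simply makes explicit what the paper leaves implicit.
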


Thus we have a proof of special cases of~\cite[Conjecture~3.7]{OSS03III} using Nakajima monomials.

\section[Monomials to PBW data in type $A_n$]{Monomials to PBW data in type $\boldsymbol{A_n}$}\label{sec:monomials_pbw}

Let $W$ be the Weyl group associated to the Cartan matrix $C = (C_{ij})$.  In this section, we are considering the Cartan matrix of type $A_n$, so $W$ is isomorphic to the symmetric group $\mathfrak{S}_{n+1}$.  In particular, $W$ is a finite group generated by simple transpositions $s_1, \dotsc, s_n$.  Every element $w \in W$ may be expressed in the form $w = s_{i_1}s_{i_2} \dotsm s_{i_\ell}$; in this case, if $\ell$ is minimal among all such expressions for $w$, then $\ell$ is called the length of $w$ and the expression above is called reduced.  An element $w\in W$ may have more than one reduced expression, but all reduced expressions have the same length.

There is a unique element of longest length in $W$, which we denote as $w_0$.  Associated to a reduced expression $w_0 = s_{i_1}s_{i_2} \dotsm s_{i_N}$, where $N$ is the number of positive roots, one may construct a realization of the crystal $B(\infty)$ using Lusztig's PBW-type basis~\cite{L90}.  This construction can be complicated in large rank.  However, in~\cite{SST17}, an efficient algorithm for calculating the crystal structure for certain reduced expressions was developed.  We now recall this efficient procedure for a specifically chosen reduced expression for $w_0$.

Fix, once and for all, the reduced expression
\begin{gather}
\label{eq:dual_BZL_word}
w_0 = s_n(s_{n-1}s_n) (s_{n-2} s_{n-1} s_n) \dotsm (s_1 s_2 \dotsm s_n).
\end{gather}
The reduced expression in equation~\eqref{eq:dual_BZL_word} is called the \defn{dual BZL word}.

\begin{Lemma}[\cite{SST17}]\label{lemma:simply_braided}
The word $\ii = (n,n-1,n,\dots,1,2,\dots,n)$ is simply braided.
\end{Lemma}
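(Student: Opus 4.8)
The plan is to unwind the definition of a simply braided word from \cite{SST17} and to verify its defining condition directly, using the transparent block structure of the dual BZL word. First I would write $\ii = B^{(1)} B^{(2)} \cdots B^{(n)}$, where the $k$-th block is the increasing string $B^{(k)} = (n-k+1,\, n-k+2,\, \dots,\, n)$. Then each letter $j$ occurs in $\ii$ exactly once inside each of the blocks $B^{(n-j+1)}, \dots, B^{(n)}$ (hence $j$ times in all), and within every block that contains it, $j$ is immediately preceded by $j-1$ (when $j-1$ also lies in that block) and immediately followed by $j+1$ (whenever $j<n$). Since being simply braided is a condition checked one index at a time, and the condition at $i$ only involves the occurrences in $\ii$ of $i$ and of its Dynkin neighbors $i\pm1$, it is enough to record, for each $i$, the subword $\ii\big|_i$ of $\ii$ on the alphabet $\{i-1,i,i+1\}\cap\{1,\dots,n\}$.

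From the block description one reads off, for $1<i<n$,
\[
\ii\big|_i \;=\; (i+1),\ (i,\,i+1),\ \underbrace{(i-1,\,i,\,i+1),\ \dots,\ (i-1,\,i,\,i+1)}_{i-1\ \text{copies}},
\]
while at the boundary indices $\ii\big|_1 = (2,\,1,\,2)$ and $\ii\big|_n$ is the string $n, n-1, n, n-1, \dots, n-1, n$ in which $n$ occurs $n$ times. In every case each neighbor of $i$ appears at most once in each of the gaps delimited by the occurrences of $i$ (the initial and final gaps included), and within the interior gaps the two neighbors $i-1$ and $i+1$ commute, since $i-1\not\sim i+1$. Having this explicit, regular pattern in hand, I would then check that it meets the definition of simply braided at $i$ from \cite{SST17}: reordering each interior gap by the commutation $i-1 \leftrightarrow i+1$ where necessary brings $\ii\big|_i$ into the normal form for which the bracketing rule for the Kashiwara operators on Lusztig data applies. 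Since these commutations only ever exchange the pairwise non-adjacent letters $i-1$ and $i+1$, they disturb no occurrence relevant to any other index, so the checks at the various $i$ are mutually compatible.

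An alternative, and arguably cleaner, organization is an induction on $n$: deleting every occurrence of the letter $n$ from $\ii$ returns exactly the dual BZL word for $A_{n-1}$, and since this deletion preserves the relative order of all letters $<n$, for every $i\le n-2$ the subword $\ii\big|_i$ coincides with the corresponding rank-$(n-1)$ subword, so the inductive hypothesis supplies the condition there; only $i=n-1$ and $i=n$, handled by the two small computations above, then remain. Either way, I expect the real difficulty to be not any single computation but rather pinning down the exact normal form that the definition in \cite{SST17} requires, and confirming that, when the condition is phrased inside the full word $\ii$ rather than the three-letter subword, the required commutation moves can genuinely be realized as a legal sequence of elementary moves; the boundary cases $i=1$ and $i=n$ must also be treated separately rather than folded into the generic pattern.
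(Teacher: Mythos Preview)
The paper does not prove this lemma at all: it is stated with a citation to \cite{SST17}, and immediately afterward the authors write ``We will not need the explicit notion of a word being simply braided.'' So there is nothing to compare your argument against; the paper simply imports the result.

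Your proposal therefore goes well beyond what the paper does. The block decomposition $\ii = B^{(1)} \cdots B^{(n)}$ with $B^{(k)} = (n-k+1,\dots,n)$ is correct, and your descriptions of the three-letter subwords $\ii|_i$ are accurate. The inductive reduction you sketch (deleting all occurrences of $n$ returns the dual BZL word for $A_{n-1}$) is also sound. That said, as you yourself acknowledge, the proposal is a plan rather than a proof: you never state the actual definition of \emph{simply braided} from \cite{SST17}, and so the crucial step---checking that the pattern you have isolated satisfies that definition---is left unexecuted. The paper is of no help here, since it deliberately suppresses the definition. To turn this into a proof you would need to consult \cite{SST17} directly, extract the precise condition (which involves being able to reach, by $2$-term braid moves alone, a word beginning with $i$, and then iterating), and verify it against your subword analysis.
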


We will not need the explicit notion of a word being simply braided.  For us, this means that the crystal structure on $B(\infty)$ can be understood efficiently using a bracketing rule on Kostant partitions.\footnote{This is the same crystal structure as defined in, for example,~\cite{BFZ96}, but the combinatorics defining the rules is different.}  Let $\Phi^+ = \{\alpha_{j,k} = \alpha_j + \alpha_{j+1} + \cdots + \alpha_k \colon 1\le j \le k \le n\}$ denote the set of positive roots of type $A_n$, define $\mathcal{R} = \{ (\alpha) \colon \alpha \in \Phi^+\}$, and set $\Kp(\infty)$ to be the free $\ZZ_{\ge0}$-span of $\R$.  An element of $\Kp(\infty)$ will be considered a Kostant partition, and denoted by $\bm\alpha = \sum\limits_{(\alpha)\in\mathcal{R}} c_\alpha(\alpha)$, where $c_\alpha \in \ZZ_{\ge0}$.  For some fixed $i\in I$, in order to compute $f_i\bm\alpha$ and $e_i\bm\alpha$, one must first calculate the bracketing sequence associated to $i$ and $\bm\alpha$.  Indeed, following~\cite{SST17}, define $S_i(\bm\alpha)$ to be the string of brackets
\begin{gather*}
\underbrace{)\cdots)}_{c_{\alpha_{i,n}}}\
\underbrace{(\cdots(}_{c_{\alpha_{i+1,n}}}\
\underbrace{)\cdots)}_{c_{\alpha_{i,n-1}}}\
\underbrace{(\cdots(}_{c_{\alpha_{i+1,n-1}}}\ \ \
\cdots\ \ \
\underbrace{)\cdots)}_{c_{\alpha_{i,i+1}}}\
\underbrace{(\cdots(}_{c_{\alpha_{i+1,i+1}}}\
\underbrace{)\cdots)}_{c_{\alpha_{i,i}}}.
\end{gather*}
Successively cancel $()$-pairs to obtain sequence of the form $)\cdots)(\cdots($. We call the remaining brackets \defn{uncanceled}.

\begin{dfn} \label{def:KPops}
Let $i \in I$ and $\bm\alpha =  \sum\limits_{(\alpha)\in \R} c_\alpha(\alpha)\in\Kp(\infty)$.
\begin{itemize}\itemsep=0pt
\item Let $\beta$ be the root corresponding to the rightmost uncanceled `$)$' in $S_i(\bm\alpha)$.  Define
\begin{gather*}
e_i\bm\alpha = \bm\alpha - (\beta) + (\beta-\alpha_i).
\end{gather*}
If $\beta=\alpha_i$, we interpret $(0)$ as the additive identity in $\Kp(\infty)$.  If no such `$)$' exists, then~$e_i\bm\alpha$ is undefined.
\item Let $\gamma$ denote the root corresponding to the leftmost uncanceled `$($' in $S_i(\bm\alpha)$.  Define,
\begin{gather*}
f_i\bm\alpha = \bm\alpha - (\gamma) + (\gamma+\alpha_i).
\end{gather*}
If no such `$($' exists, set $f_i\bm\alpha = \bm\alpha + (\alpha_i)$.
\item $\wt(\bm\alpha) = -\sum\limits_{\alpha\in\Phi^+} c_\alpha\alpha.$
\item $\varepsilon_i(\bm\alpha) = \text{number of uncanceled `$)$' in the bracketing sequence of $\bm\alpha$}$.
\item $\varphi_i(\bm\alpha) = \varepsilon_i(\bm\alpha) + \langle h_i , \wt(\bm\alpha) \rangle$.
\end{itemize}
\end{dfn}

\begin{ex}Let $n=3$ and define $\bm\alpha = 2(\alpha_3) + 3(\alpha_2+\alpha_3) + 2(\alpha_2) + 2(\alpha_1+\alpha_2) + 5(\alpha_1)$.  Then
\begin{gather*}
S_1(\bm\alpha)  = {(}\ (\ (\ )\ )\ (\ (\ )\ )\ )\ )\ )\, , \qquad
S_2(\bm\alpha)  = {)}\ )\ )\ (\ (\ )\ )\, ,\qquad
S_3(\bm\alpha)  = {)}\ )\, .
\end{gather*}
Hence
\begin{gather*}
f_1\bm\alpha = 2(\alpha_3) + 3(\alpha_2+\alpha_3) + 2(\alpha_2) + 2(\alpha_1+\alpha_2) + 6(\alpha_1),\\
f_2\bm\alpha = 2(\alpha_3) + 3(\alpha_2+\alpha_3) + 3(\alpha_2) + 2(\alpha_1+\alpha_2) + 5(\alpha_1),\\
f_3\bm\alpha = 3(\alpha_3) + 3(\alpha_2+\alpha_3) + 2(\alpha_2) + 2(\alpha_1+\alpha_2) + 5(\alpha_1).
\end{gather*}
\end{ex}

\begin{thm}[\cite{SST17}]Let $u_\infty$ be the unique element of $B(\infty)$ with weight zero.  The map \smash{$u_\infty \mapsto (0)$} induces a $U_q(A_n)$-crystal isomorphism between~$B(\infty)$ and $\Kp(\infty)$.
\end{thm}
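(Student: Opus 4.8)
The plan is to identify $\Kp(\infty)$ with Lusztig's PBW realization of $B(\infty)$ attached to the dual BZL word $\ii = (n,n-1,n,\dots,1,2,\dots,n)$, and then transfer the crystal structure. For the underlying sets: enumerating the positive roots from the reduced expression $w_0 = s_n(s_{n-1}s_n)\dotsm(s_1 s_2 \dotsm s_n)$ in the standard way, $\beta_k = s_{i_1}s_{i_2}\dotsm s_{i_{k-1}}(\alpha_{i_k})$, a short induction shows the $\beta_k$ run exactly once over $\Phi^+ = \{\alpha_{j,k}\}$; hence an exponent vector for a PBW monomial $\prod_k f_{\beta_k}^{(c_{\beta_k})}$ is precisely the data of a Kostant partition $\bm\alpha = \sum_{(\alpha)\in\R} c_\alpha(\alpha)$, and the trivial monomial corresponds to $(0)$. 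Because the PBW monomials for $\ii$ form a basis of $U_q^-(A_n)$ with respect to which the canonical basis is unitriangular, with exactly one PBW monomial surviving in the $q\to 0$ limit~\cite{L90,GL93}, reading off PBW data from canonical basis elements yields a bijection $B(\infty)\leftrightarrow\Kp(\infty)$ sending $u_\infty$ to $(0)$; moreover the weight of a PBW monomial is $-\sum_k c_{\beta_k}\beta_k$, matching the formula for $\wt(\bm\alpha)$ in Definition~\ref{def:KPops}.

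It remains to match the Kashiwara operators and the functions $\varepsilon_i,\varphi_i$. For a reduced word whose first letter is $i$, the operator $f_i$ on PBW data simply increments the first exponent, and for general $i$ one must commute or braid $i$ to the front, each braid relation inducing a piecewise-linear transition map on the exponents in the spirit of~\cite{BZ01}. The point of Lemma~\ref{lemma:simply_braided} is that for the dual BZL word these transition maps collapse to an elementary \emph{bracketing} operation: one reads off $\ii$ the positive roots involving or adjacent to $\alpha_i$, namely $\alpha_{i,n},\alpha_{i+1,n},\alpha_{i,n-1},\alpha_{i+1,n-1},\dots,\alpha_{i,i+1},\alpha_{i+1,i+1},\alpha_{i,i}$, in exactly the order recorded by $S_i(\bm\alpha)$ --- a root $\alpha_{i,\ell}$ contributing $c_{\alpha_{i,\ell}}$ closing brackets (the $e_i$-targets, from which $\alpha_i$ can be removed) and a root $\alpha_{i+1,\ell}$ contributing $c_{\alpha_{i+1,\ell}}$ opening brackets (the $f_i$-targets, to which $\alpha_i$ can be adjoined on the left) --- then cancels $()$-pairs, and acts by $(\gamma)\mapsto(\gamma+\alpha_i)$ on the leftmost uncancelled $($ for $f_i$ (boundary case $f_i\bm\alpha = \bm\alpha + (\alpha_i)$) and by $(\beta)\mapsto(\beta-\alpha_i)$ on the rightmost uncancelled $)$ for $e_i$ (with the $(0)$ convention). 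Conceptually this is the tensor-product signature rule in disguise, applied to the iterated embedding of $B(\infty)$ into elementary crystals along $\ii$ recalled above. Finally $\varepsilon_i(\bm\alpha)$, the number of uncancelled $)$, is the usual expression for $\varepsilon_i$ in terms of such a bracketing sequence, and $\varphi_i = \varepsilon_i + \langle h_i,\wt(\bm\alpha)\rangle$ is then forced.

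The main obstacle is the middle step --- proving that the sequence of braid moves needed to bring $i$ to the front of $(n,n-1,n,\dots,1,2,\dots,n)$ transforms the PBW exponent vector exactly by the bracketing procedure of Definition~\ref{def:KPops}, with the correct order of roots and the correct open/close assignment. In type $A_n$ this is a finite but delicate combinatorial bookkeeping, tracking which braid relations occur and how each acts on the coordinates indexed by the $\alpha_{j,k}$; isolating the structural feature that makes this collapse clean is precisely the notion of a simply braided word of~\cite{SST17}, so given Lemma~\ref{lemma:simply_braided} the theorem reduces to unwinding definitions. An alternative route would bypass the transition maps entirely: equip $\Kp(\infty)$ with a $*$-crystal structure (taking $\bm\alpha\mapsto\bm\alpha^*$ via the bar-involution, equivalently the PBW basis for the opposite reduced word) and verify the hypotheses of a recognition theorem for $B(\infty)$ in the style of~\cite{K93}; but checking those hypotheses again requires the same control over the bracketing combinatorics, so the simply braided input is in the end unavoidable.
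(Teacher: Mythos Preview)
The paper does not prove this theorem: it is quoted verbatim from~\cite{SST17} with no accompanying argument, so there is no ``paper's own proof'' to compare against. What you have written is a reasonable outline of the strategy actually used in~\cite{SST17}: identify $\Kp(\infty)$ with the PBW parametrization of $B(\infty)$ for the reduced word~\eqref{eq:dual_BZL_word}, and then use the simply-braided property (Lemma~\ref{lemma:simply_braided}) to show that the piecewise-linear transition maps needed to compute $e_i,f_i$ on Lusztig data collapse to the bracketing rule of Definition~\ref{def:KPops}.

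Your sketch is honest about where the real work lies --- the ``middle step'' of tracking the braid moves and verifying they yield exactly the sequence $S_i(\bm\alpha)$ with the correct open/close assignments --- and you correctly note that this is precisely what the notion of a simply braided word packages. One small caution: the remark that the bracketing is ``the tensor-product signature rule in disguise, applied to the iterated embedding of $B(\infty)$ into elementary crystals along $\ii$'' is suggestive but not quite a proof on its own; the polyhedral embedding~\eqref{eq:I_path} uses a \emph{periodic} word in all of $I$, not the finite reduced word $\ii$, so one still needs the argument of~\cite{SST17} (or an equivalent) to pin down the crystal structure on the PBW side. With that caveat, your proposal accurately reflects the content of the cited reference.
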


For the remainder of this section, define $\cc = (c_{ij})_{i\neq j}$ by $c_{ij} = 0$ if $i > j$ and $c_{ij} = 1$ if $i < j$.  This is the same convention used in~\cite{KKS07,KS08}.

\begin{thm}\label{thm:monomial_Kostant}The map defined by
\begin{gather*}
\prod_{1\le j \le k \le n} \left( \prod_{p=j}^k A_{p,k-p}^{-1} \right)^{\ell_{j,k}} \mapsto \sum_{j=1}^n\sum_{k=j}^n \ell_{j,k}(\alpha_{j,k})
\end{gather*}
is a $U_q(A_n)$-crystal isomorphism from $\MM(\infty)_\cc$ to Kostant partitions.
\end{thm}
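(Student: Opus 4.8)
The plan is to show that the proposed map, call it $\Psi$, is a bijection of underlying sets intertwining the crystal operators. I would set up notation by writing a general element of $\MM(\infty)_\cc$ in the form $M = \prod_{1\le j\le k\le n}\bigl(\prod_{p=j}^{k} A_{p,k-p}^{-1}\bigr)^{\ell_{j,k}}$ and verifying first that this exponent data $(\ell_{j,k})$ is an unambiguous coordinate system on $\MM(\infty)_\cc$. This is exactly the content of the Remark preceding Section~\ref{sec:KR_crystals}: with the chosen $\cc$ (namely $c_{ij}=0$ for $i>j$ and $c_{ij}=1$ for $i<j$), there is a total order $\prec$ on $I$ with $i\prec j$ whenever $c_{ij}=1$, so $B(\infty)$ embeds into an iterated tensor product of elementary crystals and the $A_{i,k}$-exponents $a_i(k)$ are honest coordinates. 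So the first step is to pin down the change of variables between the "anti-diagonal block" variables $\prod_{p=j}^{k}A_{p,k-p}^{-1}$ indexed by positive roots $\alpha_{j,k}$ and the raw variables $A_{i,k}^{-1}$; concretely, $a_p(k-p)=\sum_{\{(j,k'):\, j\le p\le k',\, k'-j = k-p+\ldots\}}$ — I would record the precise triangular relation $\ell_{j,k}=a_j(k-j)$ adjusted along the anti-diagonal, and observe it is unitriangular hence invertible over $\ZZ$. That makes $\Psi$ a bijection of sets onto $\Kp(\infty)$, matching $\one\mapsto(0)$.

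Next I would check $\Psi$ commutes with $\wt$: the weight of $\prod_p A_{p,k-p}^{-1}$ is $-\alpha_{j,k}$ by the identification $\alpha_i=\sum_j C_{ji}\Lambda_j$ used throughout, so $\wt(M)=-\sum \ell_{j,k}\alpha_{j,k}$, matching Definition~\ref{def:KPops}. The heart of the argument is matching $f_i$ (and then $\varepsilon_i$, $\varphi_i$, $e_i$ follow). For fixed $i$, I would compute $\varphi_i(M)=\max\{\sum_{j\le k}y_i(j):k\in\ZZ\}$ in terms of the $\ell_{j,k}$, and show the partial sums $\sum_{0\le j\le k} y_i(j)$, as $k$ increases, go up by $c_{\alpha_{i+1,m}}$-type contributions and down by $c_{\alpha_{i,m}}$-type contributions in exactly the order dictated by the bracketing string $S_i(\bm\alpha)$ in Section~\ref{sec:monomials_pbw}. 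The key computational input is that $A_{p,\ell}$ contributes $Y_{i,?}^{\pm}$ precisely when $p=i$ (giving $Y_{i,\ell}Y_{i,\ell+1}$), when $p=i\pm1$ (giving $Y_{i,\ell+c_{i,p}}^{-1}$), and the chosen $\cc$ forces these shifts so that the contribution of the block for $\alpha_{j,k}$ to the $Y_i$-partial-sum profile is a single $+1$ step (if $i=j$, i.e.\ the block "starts" at $i$) or a single $-1$ step (if $j<i\le k$, i.e.\ $i$ is interior or the right end), located at the position corresponding to $k$. Summing the block contributions reproduces the string: blocks with $j=i$ give the "$($" runs of length $c_{\alpha_{i,k}}$ and blocks with $j=i+1$... — here I would be careful with the indexing, since $S_i$ pairs $c_{\alpha_{i,m}}$ (closing) against $c_{\alpha_{i+1,m}}$ (opening) — so I would instead show the $+1$ steps come from $c_{\alpha_{i+1,m}}$ (the block whose left endpoint is $i+1$, hence $\alpha_i$ is not present but $Y_{i,*}^{-1}$ appears from $p=i+1$ with the opposite sign due to $c_{i+1,i}=0$ vs $c_{i,i+1}=1$), and the $-1$ steps from $c_{\alpha_{i,m}}$, matching $S_i(\bm\alpha)$ exactly. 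Then $k_i^f$ identifies the position of the leftmost uncanceled "$($", and $MA_{i,k_i^f}^{-1}$ changes the exponent data by $\ell_{j,k}\mapsto\ell_{j,k}-1$, $\ell_{j',k}\mapsto\ell_{j',k}+1$ for the appropriate neighboring root block — precisely $\bm\alpha\mapsto\bm\alpha-(\gamma)+(\gamma+\alpha_i)$. The degenerate case (no uncanceled "$($", i.e.\ $\varphi_i=\varepsilon_i=0$ minus weight... rather $f_i\bm\alpha=\bm\alpha+(\alpha_i)$) corresponds to $k_i^f$ landing in a region where $M$ has no $A_{i,\cdot}$ factor yet, adding a fresh $(\alpha_i)$; I would check the monomial-side $\varphi_i$ is still positive there because of the freshly available slot, consistent with the modified-Kashiwara convention that $f_i$ is always defined on $\MM(\infty)_\cc$.

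Having matched $f_i$, the identities $\varepsilon_i(M)=\#\{$uncanceled "$)$"$\}$ and $\varphi_i=\varepsilon_i+\langle h_i,\wt\rangle$ follow from the bracket-counting already done together with $\eqref{eq:oldep}$, and $e_i$ is handled symmetrically using $k_i^e$ and the rightmost uncanceled "$)$". Finally, since both $\MM(\infty)_\cc$ and $\Kp(\infty)$ are connected crystals isomorphic to $B(\infty)$ (Kang--Kim--Shin and the theorem of \cite{SST17} quoted above), a map that is a bijection of sets, sends $\one\mapsto(0)$, and commutes with all $f_i$ is automatically a crystal isomorphism; I could even shortcut the $\varepsilon_i,\varphi_i,e_i$ verifications by invoking uniqueness of crystal morphisms $B(\infty)\to B(\infty)$ fixing the highest weight element. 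I expect the main obstacle to be bookkeeping: getting the anti-diagonal indexing $A_{p,k-p}$ aligned with the root $\alpha_{j,k}$ and checking that, under the specific $\cc$, each root-block contributes exactly one unit step to the $Y_i$-profile at exactly the position that reproduces $S_i(\bm\alpha)$ — the sign and shift conventions ($c_{i,i+1}=1$ vs $c_{i+1,i}=0$) are what make this work, and getting them exactly right, rather than any conceptual difficulty, is where the care is needed.
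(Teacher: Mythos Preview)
Your plan is essentially the paper's own approach: verify that $\Psi$ intertwines the $f_i$ by computing the running partial sums $\sum_{r\le s} y_i(r)$ in terms of the $\ell_{j,k}$ and matching the locations of their maxima with the bracketing sequence $S_i(\bm\alpha)$. The paper carries this out by first expanding each block $\prod_{p=j}^k A_{p,k-p}^{-1}$ telescopically to $Y_{j-1,k-j+1}Y_{j,k-j+1}^{-1}Y_{k,0}^{-1}Y_{k+1,0}$, from which it reads off the closed formula
\[
\sum_{r=0}^{s} y_i(r)=-\ell_{i,i}+\sum_{t=1}^{i}(\ell_{t,i-1}-\ell_{t,i})+\sum_{u=0}^{s-1}(\ell_{i+1,i+1+u}-\ell_{i,i+u}),
\]
so that the $s$-dependence is exactly the running sum of $\ell_{i+1,m}-\ell_{i,m}$, reproducing $S_i(\bm\alpha)$ and giving $k_i^f(M)=k_0-i$. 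Your more conceptual ``each root-block contributes one $\pm 1$ step'' is the same computation in words.

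The one place your outline is thinner than the paper is the well-definedness step. Unitriangularity of the change of variables $(a_i(q))\leftrightarrow(\ell_{j,k})$ gives a bijection of \emph{integer} tuples; it does not by itself show that elements of $\MM(\infty)_\cc$ have $\ell_{j,k}\ge 0$, which is what lands $\Psi$ in $\Kp(\infty)$ and makes the bracketing string $S_i(\bm\alpha)$ meaningful. The paper isolates this as a separate lemma (Lemma~\ref{lemma:monomials_type_A}): it imports from \cite[Theorem~4.1]{KS08} the characterization that $\prod A_{i,q}^{-a_{i,q}}\in\MM(\infty)_\cc$ iff $0\le a_{1,i-1}\le a_{2,i-2}\le\cdots\le a_{i,0}$, and then observes that under $\ell_{i,i+q}=a_{i,q}-a_{i-1,q+1}$ these inequalities are exactly $\ell_{j,k}\ge 0$. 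Your closing shortcut (both sides are $B(\infty)$, so a highest-weight-preserving $f_i$-intertwiner is automatically an isomorphism) would recover this a posteriori, but note the mild circularity: the $f_i$-intertwining computation already uses that $S_i(\bm\alpha)$ makes sense, i.e., $\ell_{j,k}\ge 0$. You can break the loop by running the $f_i$-check inductively from $\one$, but it is cleaner to establish $\ell_{j,k}\ge 0$ up front as the paper does.
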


Before proving the theorem, it must be shown that such a map is well-defined.

\begin{Lemma}\label{lemma:monomials_type_A}Each $M \in \MM(\infty)_\cc$ can be written uniquely in the form
\begin{gather*}
M = \prod_{1\le j \le k \le n} \left( \prod_{p=j}^k A_{p,k-p}^{-1} \right)^{\ell_{j,k}},
\end{gather*}
for some $\ell_{j,k} \in \ZZ_{\ge0}$.
\end{Lemma}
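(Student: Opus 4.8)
Abbreviate $B_{j,k} := \prod_{p=j}^{k} A_{p,k-p}^{-1}$ for $1 \le j \le k \le n$, with the convention $B_{j,k} := \one$ when $j > k$; the lemma asserts that every $M \in \MM(\infty)_{\cc}$ has a unique expression $M = \prod_{1\le j\le k\le n} B_{j,k}^{\ell_{j,k}}$ with $\ell_{j,k}\in\ZZ_{\ge0}$. I will split this into \emph{uniqueness} (the $B_{j,k}$ are multiplicatively independent) and \emph{existence} (the set $\mathcal V := \{\prod_{1\le j\le k\le n} B_{j,k}^{\ell_{j,k}} : \ell_{j,k}\in\ZZ_{\ge0}\}$ contains $\one$ and is stable under every $\overline f_i$) — existence in this form suffices since $\MM(\infty)_{\cc}$ is by definition the closure of $\one = \prod B_{j,k}^{0}$ under the operators $\overline f_i$. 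The common computational ingredient is the explicit $Y$-form of $B_{j,k}$: writing $A_{p,m}^{-1} = Y_{p,m}^{-1}Y_{p,m+1}^{-1}Y_{p-1,m+1}Y_{p+1,m}$ in type $A_n$ for this $\cc$ (using $C_{p,p\pm1}=-1$, $c_{p-1,p}=1$, $c_{p+1,p}=0$, with $Y_{0,\ast}=Y_{n+1,\ast}=\one$) and telescoping the product over $p=j,\dots,k$ one anti-diagonal at a time yields $B_{j,k} = Y_{j-1,\,k-j+1}\,Y_{j,\,k-j+1}^{-1}\,Y_{k,0}^{-1}\,Y_{k+1,0}$. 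Reading this off, the exponent $y_i(r)$ of $Y_{i,r}$ in a monomial $M = \prod B_{j,k}^{\ell_{j,k}}$ equals $y_i(r) = \ell_{i+1,\,i+r} - \ell_{i,\,i+r-1}$ for $r\ge1$, plus a similar closed form for $y_i(0)$, where indices falling outside $\{1\le j\le k\le n\}$ count as $0$.

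For uniqueness, suppose $\prod B_{j,k}^{m_{j,k}}=\one$ with $m_{j,k}\in\ZZ$ and organize the $Y$-exponents by anti-diagonal (the sum of the two indices). On the top anti-diagonal the only contributions are the factors $Y_{j,\,n-j+1}^{-1}$ from $B_{j,n}$, so $m_{a,n}=0$ for all $a$; on anti-diagonal $d\le n$ the exponent of $Y_{a,\,d-a}$ (for $1\le a<d$) is $m_{a+1,d}-m_{a,d-1}$, so $m_{\bullet,d}=0$ forces $m_{\bullet,d-1}=0$, and downward induction on $d$ kills every $m_{j,k}$. Hence the $\ell_{j,k}$, when they exist, are determined by $M$.

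For existence, the key identity is the telescoping $A_{i,m}^{-1} = B_{i,\,i+m}\,B_{i+1,\,i+m}^{-1}$ (which reads $A_{i,0}^{-1}=B_{i,i}$ for $m=0$). Given $M = \prod B_{j,k}^{\ell_{j,k}}\in\mathcal V$, one has $\overline f_i M = M\,A_{i,m}^{-1}$ with $m = \overline k_i^f(M)$, so it is enough to check (a) $i+m\le n$, and (b) $\ell_{i+1,\,i+m}\ge1$ whenever $m\ge1$: then $\overline f_i M = M\,B_{i,\,i+m}\,B_{i+1,\,i+m}^{-1}$ is a product of genuine generators (by (a)) with all exponents $\ge0$ (by (b); the subtracted factor $B_{i+1,\,i+m}$ is $\one$ when $m=0$), so $\overline f_i M = \prod B_{j,k}^{\ell'_{j,k}}\in\mathcal V$, giving $\MM(\infty)_{\cc}\subseteq\mathcal V$. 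Both (a) and (b) follow from the formula for $y_i$: since $y_i(n-i+1)=-\ell_{i,n}\le0$ and $y_i(r)=0$ for $r>n-i+1$, the partial sums $P_i(s):=\sum_{0\le r\le s}y_i(r)$ are non-increasing for $s\ge n-i$, so $\varphi_i(M)=\max_{s\ge0}P_i(s)$ is attained for some $s\le n-i$, forcing $m\le n-i$; and if $m\ge1$, minimality of $\overline k_i^f(M)$ gives $P_i(m-1)<P_i(m)$, i.e.\ $y_i(m)>0$, i.e.\ $\ell_{i+1,\,i+m}>\ell_{i,\,i+m-1}\ge0$. I expect step (b) to be the only real obstacle — namely, pinning down $\overline k_i^f(M)$ in the $\ell$-coordinates precisely enough to see simultaneously $i+\overline k_i^f\le n$ and $\ell_{i+1,\,i+\overline k_i^f}\ge1$; once the $Y$-form of $B_{j,k}$ and the telescoping identity are in hand, the rest is an unwinding of definitions.
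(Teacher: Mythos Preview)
Your argument is correct, but it takes a genuinely different route from the paper's proof. The paper does not argue directly: it quotes \cite[Theorem~4.1]{KS08}, which says that $\prod_{i,q} A_{i,q}^{-a_{i,q}}\in\MM(\infty)_\cc$ if and only if the chain inequalities $0\le a_{1,i-1}\le a_{2,i-2}\le\cdots\le a_{i,0}$ hold for every $i$, and then simply observes that the linear change of coordinates $\ell_{j,k}=a_{j,k-j}-a_{j-1,k-j+1}$ (with inverse $a_{i,q}=\sum_{t\le i}\ell_{t,i+q}$) converts those chain inequalities into the single condition $\ell_{j,k}\ge0$. Uniqueness is immediate because this change of coordinates is a bijection.

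Your approach is self-contained: you establish uniqueness by reading off the $Y$-exponents of $B_{j,k}=Y_{j-1,k-j+1}Y_{j,k-j+1}^{-1}Y_{k,0}^{-1}Y_{k+1,0}$ along anti-diagonals, and existence by showing $\mathcal V$ is stable under $\overline f_i$ via the telescoping identity $A_{i,m}^{-1}=B_{i,i+m}B_{i+1,i+m}^{-1}$ together with the partial-sum bound $\overline k_i^f(M)\le n-i$ and the strict inequality $\ell_{i+1,i+m}>\ell_{i,i+m-1}$ forced by minimality. The trade-off is that you do more computation but need no external reference; in fact the $Y$-form of $B_{j,k}$ and the analysis of the partial sums $P_i(s)$ that you carry out here are exactly the ingredients the paper uses later in its proof of Theorem~\ref{thm:monomial_Kostant}, so your proof effectively front-loads that work. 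The paper instead keeps the lemma short by outsourcing it to \cite{KS08}, and only unpacks the $Y$-expansion when it comes to matching $\overline k_i^f(M)$ with the bracketing rule on Kostant partitions.
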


\begin{proof}In~\cite[Theorem~4.1]{KS08}, it was shown that
\begin{gather*}
\prod_{i \in I} \prod_{q=0}^{n-i} A_{i,q}^{-a_{i,q}}  \in \MM(\infty)_\cc
\end{gather*}
if and only if
\begin{gather}\label{eq:monomial_condition}
0 \leq a_{1,i-1} \leq a_{2,i-2} \leq \cdots \leq a_{i,0}
\end{gather}
for all $1 \leq i \leq n$. It is straightforward to see that we have
\begin{gather*}
a_{i,q} = \ell_{i,i+q} + \ell_{i-1,i+q} + \cdots + \ell_{1,i+q},\\ \ell_{i,i+q} = a_{i,q} - a_{i-1,q+1},
\end{gather*}
and that $\ell_{i,k} \in \ZZ_{\geq 0}$ if and only if~\eqref{eq:monomial_condition} holds. Hence, the claim follows.
\end{proof}

\begin{proof}[Proof of Theorem~\ref{thm:monomial_Kostant}] Let $i\in I= \{1,2,\dots,n\}$.  For notational brevity, once and for all in this proof, denote
\begin{alignat*}{3}
& M= \prod_{1\le j \le k \le n} \left( \prod_{p=j}^k A_{p,k-p}^{-1} \right)^{\ell_{j,k}}, \qquad & & f_iM = \prod_{1\le j \le k \le n} \left( \prod_{p=j}^k A_{p,k-p}^{-1} \right)^{\ell_{j,k}'},& \\
& \bm\alpha= \sum_{1\le j \le k \le n} \ell_{j,k} (\alpha_{j,k}),\qquad & & f_i\bm\alpha= \sum_{1\le j \le k \le n} \ell_{j,k}'' (\alpha_{j,k}).&
\end{alignat*}
Recall
\begin{gather*}
S_i(\bm\alpha) = \underbrace{)\cdots)}_{\ell_{{i,n}}}\
\underbrace{(\cdots(}_{\ell_{{i+1,n}}}\
\underbrace{)\cdots)}_{\ell_{{i,n-1}}}\
\underbrace{(\cdots(}_{\ell_{{i+1,n-1}}}\ \ \
\cdots\ \ \
\underbrace{)\cdots)}_{\ell_{{i,i+1}}}\
\underbrace{(\cdots(}_{\ell_{{i+1,i+1}}}\
\underbrace{)\cdots)}_{\ell_{{i,i}}}.
\end{gather*}
Suppose the leftmost uncanceled `(' in $S_i(\bm\alpha)$ occurs in the position corresponding to the root $\alpha_{i+1,k_0}$, for some $i < k_0 \le n$.  Then $f_i\bm\alpha = \bm\alpha - (\alpha_{i+1,k_0}) + (\alpha_{i,k_0})$. Hence
\begin{gather*}
\ell_{j,k}'' = \begin{cases}
\ell_{i,k_0} + 1 & \text{if } (j,k) = (i,k_0) , \\
\ell_{i,k_0} - 1 & \text{if } (j,k) = (i+1,k_0), \\
\ell_{j,k} & \text{otherwise}.
\end{cases}
\end{gather*}
If there is no uncanceled `(' in $S_i(\bm\alpha)$, then set $k_0 = i$; therefore, $f_i\bm\alpha = \bm\alpha + (\alpha_{i,i})$ and
\begin{gather*}
\ell_{j,k}'' = \begin{cases}
 \ell_{i,i} + 1 & \text{if } (j,k) = (i,i) , \\
 \ell_{j,k} & \text{otherwise}.
\end{cases}
\end{gather*}
We intend to show $k_i^f(M) = k_0 - i$; in this case, a direct calculation shows that $\ell_{j,k}' = \ell_{j,k}''$ for all $1 \le j \le k \le n$.

Expanding the given expression for $M$, we have
\begin{align*}
M & = \prod_{1\le j \le k \le n} \left( \prod_{p=j}^k A_{p,k-p}^{-1} \right)^{\ell_{j,k}}
\\ & = \prod_{1\le j \le k \le n} \left( \prod_{p=j}^k Y_{p,k-p}^{-1} Y_{p,k-p+1}^{-1} Y_{p-1,k-p+1} Y_{p+1,k-p} \right)^{\ell_{j,k}}
\\ & = \prod_{1\le j \le k \le n} \big( Y_{j-1,k-j+1} Y_{j,k-j+1}^{-1} Y_{k,0}^{-1} Y_{k+1,0} \big)^{\ell_{j,k}}.
\end{align*}
If we express this same $M$ using the form $\prod\limits_{p\in I} \prod\limits_{q\ge 0} Y_{p,q}^{y_p(q)}$, then, for $i\in I$,
\begin{gather*}
\sum_{r=0}^s y_i(r) = -\ell_{i,i} + \sum_{t=1}^i (\ell_{t,i-1} - \ell_{t,i}) + \sum_{u=0}^{s-1} (\ell_{i+1,i+1+u} - \ell_{i,i+u}).
\end{gather*}
If there is no uncanceled `(' in $S_i(\bm\alpha)$, then each term in the third summand is nonpositive.  Therefore
\begin{gather*}
\varphi_i(M) = \max\left\{ \sum_{r=0}^s y_i(r) \colon s \ge 0 \right\} = 0
\end{gather*}
and
\begin{gather*}
k_i^f(M) = \min\left\{ s \ge 0 \colon \varphi_i(M) = \sum_{r=0}^s y_i(r) \right\} = 0 = k_0 -i.
\end{gather*}
On the other hand, if there is an uncanceled `(' in $S_i(\bm\alpha)$ (which corresponds to $(\alpha_{i+1,k_0})$ by our definitions), then $\ell_{i+1,i+1+t} \le \ell_{i,i+t}$ for $t > k_0-i$.  In particular, the maximum
\begin{gather*}
\max\left\{ \sum_{r=0}^s y_i(r) \colon s \ge 0 \right\}
\end{gather*}
is attained when $s = k_0-i$.  This maximum cannot be attained for any $s_0 < s$ since, if it did, it would contradict the position of the leftmost uncanceled `(' in $S_i(\bm\alpha)$.  Hence $k_i^f(M) = k_0-i$, as required.
\end{proof}

An alternative approach to proving Theorem~\ref{thm:monomial_Kostant} would be to extend the map given in~\cite[Proposition~6.12]{Lee14} to a map from $\MM(\infty)$ to the (marginally) large tableaux model~\cite{Cliff98,HL08}. From there, one could construct the Kostant partition using the BZL (or minimal lexicographic) word
\begin{gather*}
w_0 = s_1(s_2 s_1) (s_3 s_2 s_1) \dotsm (s_n s_{n-1} \dotsm s_1)
\end{gather*}
and then change that into the dual BZL word to recover Theorem~\ref{thm:monomial_Kostant}.

\section{Open problems}\label{sec:problems}

An original goal of this paper was to use Theorems~\ref{thm:monomial_virtualization} and~\ref{thm:monomial_Kostant} to give a method to extract Lusztig data from Nakajima monomials. However, because of the compatibility condition of $\cc$ and the specific reduced word (and hence~$\cc$ that comes from the corresponding orientation of the Dynkin diagram) needed for Theorem~\ref{thm:monomial_Kostant}, our techniques do not apply. In this section, we discuss these limitations in more detail.

We first note that equation~\eqref{eq:weight_embedding} gives a virtualization of Lusztig data in types $B_n$ and $C_n$ in terms of type $A_{2n-1}$, and this corresponds to the virtualization map given in~\cite{JS15, NS08III} (in terms of MV polytopes). Indeed, let~$\ii$ denote a reduced expression for $w_0 \in W$ corresponding to a~finite type~$\g$, and fix a diagram automorphism $\phi$.  Then
\begin{gather*}
\virtual{\ii} := \overrightarrow{\prod_{i \in \ii}} \prod_{j \in \phi^{-1}(i)} j
\end{gather*}
is a reduced expression for $\virtual{w}_0 \in \virtual{W}$. Let $\mathcal{L} = (L_i)_{i \in \ii}$ denote the Lusztig data for an element in the set of MV polytopes of type $\g$. Then we define
\begin{gather}\label{eq:virtualization_Luzstig_data}
v(\mathcal{L}) = (\gamma_{\phi(j)} L_{\phi(j)})_{j \in \phi^{-1}(\ii)}.
\end{gather}
However, there is no reduced word $\ii$ for $w_0$ in type $B_n$ (equiv.~$C_n$) such that $\virtual{\ii}$ is the reduced word given by Lemma~\ref{lemma:simply_braided}, which is used in Theorem~\ref{thm:monomial_Kostant}. Thus, one way to solve this problem would be determine an explicit isomorphism
\begin{gather*}
\psi_{\virtual{\cc},\virtual{\cc}'} \colon \  \virtual{\MM}(\infty)_{\virtual{\cc}} \longrightarrow \virtual{\MM}(\infty)_{\virtual{\cc}'}.
\end{gather*}

We recall a method to mutate $\MM_{\cc}$ into another with a different array $\MM_{\cc'}$ from~\cite{K96}. Fix some $\mm = (m_i \in \ZZ \colon i \in I)$, which we will call a \defn{mutation}, and define the array $\cc' = \{c'_{ij} \colon i,j \in I$, $i \neq j\}$ by $c'_{ij} = c_{ij} + m_i - m_j$. This induces a crystal isomorphism $\mu_{\mm} \colon \MM_{\cc} \longrightarrow \MM_{\cc'}$ given by $Y_{i,k} \mapsto Y_{i,k+m_i}$. Therefore, we have an explicit isomorphism
\begin{gather}
\label{eq:mutation_hw}
\mu_{\mm} \colon \ \MM(M)_{\cc} \longrightarrow \MM\bigl(\mu_{\mm}(M)\bigr)_{\cc'}
\end{gather}
for any fixed monomial $M$ by the restriction of $\mu_{\mm}$ to $\MM(M)$.

First, note that there exists $\virtual{\mm}$ such that $v \circ \mu_{\mm} = \mu_{\virtual{\mm}} \circ v$ by considering $\virtual{m}_j  = m_i$ for all $j \in \phi^{-1}(i)$. Next, from~\cite{K96}, we can always find a mutation $\mm$ such that $\virtual{\cc}$ is compatible with $\cc$ when the Dynkin diagram has no loops. Indeed, if we consider a linear Dynkin diagram oriented towards an endpoint $v_h$, we can flip the orientation of a single edge $v_1 \to v_0$ by considering the mutation sequence $m_{v_1} = \cdots = m_{v_t} = 1$, where $v_t$ is the other endpoint of the Dynkin diagram, and all other $m_i = 0$. We can iterate this (which corresponds to adding the mutations considered as vectors) for edges successively closer to $v_t$, thus we can obtain the Dynkin diagram oriented with both endpoints being sinks. Note that there may be values $c_{ij} \notin \ZZ_{\geq 0}$, but these will all correspond to $i \not\sim j$.

\begin{ex}Consider a Dynkin diagram that is an edge decorated path of length $3$; e.g., type $B_4$ or $C_3^{(1)}$. Then we have
\begin{gather*}
\cc :=
\begin{bmatrix}
\bullet & 1 & 1 & 1 \\
0 & \bullet & 1 & 1 \\
0 & 0 & \bullet & 1 \\
0 & 0 & 0 & \bullet
\end{bmatrix}
\xrightarrow[\hspace{60pt}]{\mu_{(0,0,1,2)}}
\begin{bmatrix}
\bullet & 1 & 0 & -1 \\
0 & \bullet & 0 & -1 \\
1 & 1 & \bullet & 0 \\
2 & 2 & 1 & \bullet
\end{bmatrix} =: \cc'.
\end{gather*}
We note that the only entries $c_{ij}$ with $i \sim j$ correspond to those on the superdiagonal and subdiagonal. We can compute this example with the following code.
\begin{lstlisting}
sage: def mutate(c, m):
....:     cp = copy(c)
....:     for i in range(c.nrows()):
....:         for j in range(c.ncols()):
....:             cp[i,j] += m[i] - m[j]
....:     return cp
sage: c = matrix([[0,1,1,1],[0,0,1,1],[0,0,0,1],[0,0,0,0]])
sage: mutate(c, [0,0,1,2])
[ 0  1  0 -1]
[ 0  0  0 -1]
[ 1  1  0  0]
[ 2  2  1  0]
\end{lstlisting}
\end{ex}

Recall that $\MM(\infty)_\cc$ can be defined as $\varinjlim\limits_{\lambda \to \infty} Y_{\lambda}^{-1} \MM(Y_{\lambda})$. However, trying to take the corresponding virtualization of highest weight crystals means we have to do a renormalization. From~\cite{K96}, there exists an abstract isomorphism of crystals $\MM(M)_{\cc} \cong \MM(M')_{\cc'}$, where~$M$ and~$M'$ are highest weight monomials with $\wt(M) = \wt(M')$. However, no combinatorial description of this isomorphism is known beyond equation~\eqref{eq:mutation_hw}. Therefore, we cannot renormalize to have the highest weight monomial be~$Y_{\lambda}$ for any choice of $\virtual{\cc}'$. Instead, we currently must describe $\psi_{\virtual{\cc}, \virtual{\cc}'}$ using a sequence of transition maps on the polyhedral model from~\cite{BZ01} to connect the Nakajima monomials with the Lusztig data in types~$B_n$ or~$C_n$ using the virtualization map.

To give a little more detail, fix a reduced word $\ii$ for $w_0$ of type $B_n$ (or $C_n$) and array $\cc$. Let~$\virtual{\cc}'$ and~$\virtual{\ii}'$ be the array and reduced word for Theorem~\ref{thm:monomial_Kostant}. Consider the commutative diagram:
\[
\begin{tikzpicture}[xscale=3,yscale=1.5]
\node (Mc)   at (0,1) {$\MM(\infty)_\cc$};
\node (hMc)  at (1,1) {$\virtual{\MM}(\infty)_{\virtual{\cc}}$};
\node (hMc') at (2,1) {$\virtual{\MM}(\infty)_{\virtual{\cc}'}$};
\node (Kp)   at (0,0) {$\Kp(\infty)_{\ii}$};
\node (hKp1) at (1,0) {$\virtual{\Kp}(\infty)_{\virtual{\ii}}$};
\node (hKp2) at (2,0) {$\virtual{\Kp}(\infty)_{\virtual{\ii}'}$,};
\path[->,font=\scriptsize]
 (Mc)   edge node[above]{$v$} (hMc)
 (hMc)  edge node[above]{$\psi_{\virtual{\cc}, \virtual{\cc}'}$} (hMc')
 (Mc)   edge node[left]{$\zeta$} (Kp)
 (hMc)  edge node[left]{$\xi$} (hKp1)
 (hMc') edge node[right]{$\Upsilon$} (hKp2)
 (Kp)   edge node[below]{$v$} (hKp1);
\draw [double equal sign distance]
 (hKp1) to (hKp2);
\end{tikzpicture}
\]
where $\Kp(\infty)_{\ii}$ is the crystal of Kostant partitions with respect to the reduced word $\ii$ and $\Upsilon$ is the isomorphism from Theorem~\ref{thm:monomial_Kostant}. The maps $\xi$ and $\zeta$ are abstract crystal isomorphisms for which we currently do not know an explicit description other than by using transition maps of~\cite{BZ01} and the diagram above. Therefore, determining an explicit map without using a sequence of transitions maps for $\psi_{\virtual{\cc}, \virtual{\cc}'}$ is equivalent to an explicit description of $\xi$. Furthermore, a description of $\psi_{\virtual{\cc}, \virtual{\cc}'}$ gives an explicit map for $\zeta$ by Theorem~\ref{thm:monomial_virtualization} and equation~\eqref{eq:virtualization_Luzstig_data}. Note that having an explicit map for $\zeta$ only partially gives a map for $\psi_{\virtual{\cc},\virtual{\cc}'}$ or $\xi$ (i.e., when restricted to the virtual crystal).

\begin{problem}Determine an analog of Theorem~{\rm \ref{thm:monomial_Kostant}} for other types.
\end{problem}

For type $A_n^{(1)}$, there exists an explicit crystal isomorphism from Nakajima monomials and generalized young walls given by composition of~\cite[Theorem~5.1]{KKS07} and~\cite[Theorem~3.1]{KS10}, which uses the path model of~\cite{KKM94}.  (See also \cite[Remark~2.4]{KS08}.) Kim and Shin gave an extension of generalized Young walls to other types using zigzag strip bundles~\cite{KS13,KS14,KS15,KS16}. Another potential application of our results is to give a virtualization map from zigzag strip bundles to generalized Young walls through the explicit crystal isomorphism with Nakajima monomials. However, this has the same technical limitation as before.

Similarly, we can apply our virtualization map to the KR crystals of~\cite{GS16}, but that requires~$\cc$ to correspond to orienting the Dynkin diagram around the cycle. Thus, this is a similar technical limitation as before, but has a more severe limitation because there is not a way to apply mutations to the cycle to obtain a compatible~$\cc$. Additionally, one needs to construct~$B^{n,s}$ using Nakajima monomials, but this should be similar to~$B^{1,s}$ for a~$\virtual{\cc}$. Yet, it is likely that a~solution to the Nakajima monomial virtualization for KR crystals is equivalent to virtualization for zigzag strip bundles via the Kyoto path model~\cite{KKMMNN91, KKMMNN92, OSS03IV} and~\cite[Theorem~4.2]{GS16}.

\subsection*{Acknowledgements}
B.S.\ was partially supported by CMU Early Career grant \#C62847 and by Simons Foundation grant \#429950. T.S.\ was partially supported by the National Science Foundation RTG grant NSF/DMS-1148634. This work benefited from computations using \sage~\cite{combinat, sage}.

\pdfbookmark[1]{References}{ref}
\LastPageEnding


\begin{thebibliography}{99}
\footnotesize\itemsep=0pt

\bibitem{baker2000}
Baker T.H., Zero actions and energy functions for perfect crystals,
 \href{https://doi.org/10.2977/prims/1195142873}{\textit{Publ. Res. Inst. Math. Sci.}} \textbf{36} (2000), 533--572.

\bibitem{BFZ96}
Berenstein A., Fomin S., Zelevinsky A., Parametrizations of canonical bases and
 totally positive matrices, \href{https://doi.org/10.1006/aima.1996.0057}{\textit{Adv. Math.}} \textbf{122} (1996), 49--149.

\bibitem{BZ01}
Berenstein A., Zelevinsky A., Tensor product multiplicities, canonical bases
 and totally positive varieties, \href{https://doi.org/10.1007/s002220000102}{\textit{Invent. Math.}} \textbf{143} (2001),
 77--128, \href{https://arxiv.org/abs/math.RT/9912012}{math.RT/9912012}.

\bibitem{BS17}
Bump D., Schilling A., Crystal bases. Representations and combinatorics, \href{https://doi.org/10.1142/9876}{World
 Sci. Publ. Co. Pte. Ltd.}, Hackensack, NJ, 2017.

\bibitem{Cliff98}
Cliff G., Crystal bases and {Y}oung tableaux, \href{https://doi.org/10.1006/jabr.1997.7244}{\textit{J.~Algebra}} \textbf{202}
 (1998), 10--35, \href{https://arxiv.org/abs/q-alg/9706025}{q-alg/9706025}.

\bibitem{DKK09}
Danilov V.I., Karzanov A.V., Koshevoy G.A., {$B_2$}-crystals: axioms,
 structure, models, \href{https://doi.org/10.1016/j.jcta.2008.06.002}{\textit{J.~Combin. Theory Ser.~A}} \textbf{116} (2009),
 265--289, \href{https://arxiv.org/abs/0708.2198}{arXiv:0708.2198}.

\bibitem{GL93}
Grojnowski I., Lusztig G., A comparison of bases of quantized enveloping
 algebras, in Linear Algebraic Groups and their Representations ({L}os
 {A}ngeles, {CA}, 1992), \href{https://doi.org/10.1090/conm/153/01304}{\textit{Contemp. Math.}}, Vol.~153, Amer. Math. Soc.,
 Providence, RI, 1993, 11--19.

\bibitem{GS16}
Gunawan E., Scrimshaw T., Realization of {K}irillov--{R}eshetikhin crystals
 {$B^{1,s}$} for {$\widehat{\mathfrak{sl}}_n$} using {N}akajima monomials,
 \href{https://arxiv.org/abs/1610.09224}{arXiv:1610.09224}.

\bibitem{HKOTT02}
Hatayama G., Kuniba A., Okado M., Takagi T., Tsuboi Z., Paths, crystals and
 fermionic formulae, in Math{P}hys Odyssey, 2001, \href{https://doi.org/10.1007/978-1-4612-0087-1_9}{\textit{Prog. Math. Phys.}},
 Vol.~23, Birkh\"{a}user Boston, Boston, MA, 2002, 205--272,
 \href{https://arxiv.org/abs/math.QA/0102113}{math.QA/0102113}.

\bibitem{HKOTY99}
Hatayama G., Kuniba A., Okado M., Takagi T., Yamada Y., Remarks on fermionic
 formula, in Recent Developments in Quantum Affine Algebras and Related Topics
 ({R}aleigh, {NC}, 1998), \href{https://doi.org/10.1090/conm/248/03826}{\textit{Contemp. Math.}}, Vol.~248, Amer. Math. Soc.,
 Providence, RI, 1999, 243--291, \href{https://arxiv.org/abs/math.QA/9812022}{math.QA/9812022}.

\bibitem{HK06}
Henriques A., Kamnitzer J., Crystals and coboundary categories, \href{https://doi.org/10.1215/S0012-7094-06-13221-0}{\textit{Duke
 Math.~J.}} \textbf{132} (2006), 191--216, \href{https://arxiv.org/abs/math.QA/0406478}{math.QA/0406478}.

\bibitem{HN06}
Hernandez D., Nakajima H., Level 0 monomial crystals, \href{https://doi.org/10.1017/S0027763000009326}{\textit{Nagoya Math.~J.}}
 \textbf{184} (2006), 85--153, \href{https://arxiv.org/abs/math.QA/0606174}{math.QA/0606174}.

\bibitem{HK02}
Hong J., Kang S.-J., Introduction to quantum groups and crystal bases,
 \href{https://doi.org/10.1090/gsm/042}{\textit{Graduate Studies in Mathematics}}, Vol.~42, Amer. Math. Soc.,
 Providence, RI, 2002.

\bibitem{HL08}
Hong J., Lee H., Young tableaux and crystal {${\mathcal B}(\infty)$} for finite
 simple {L}ie algebras, \href{https://doi.org/10.1016/j.jalgebra.2008.06.008}{\textit{J.~Algebra}} \textbf{320} (2008), 3680--3693,
 \href{https://arxiv.org/abs/math.QA/0507448}{math.QA/0507448}.

\bibitem{JS15}
Jiang Y., Sheng J., An insight into the description of the crystal structure
 for {M}irkovi\'{c}--{V}ilonen polytopes, \href{https://doi.org/10.1090/tran/6918}{\textit{Trans. Amer. Math. Soc.}}
 \textbf{369} (2017), 6407--6427, \href{https://arxiv.org/abs/1501.07628}{arXiv:1501.07628}.

\bibitem{kac90}
Kac V.G., Infinite-dimensional {L}ie algebras, 3rd~ed., \href{https://doi.org/10.1017/CBO9780511626234}{Cambridge University
 Press}, Cambridge, 1990.

\bibitem{Kamnitzer07}
Kamnitzer J., The crystal structure on the set of {M}irkovi\'{c}--{V}ilonen
 polytopes, \href{https://doi.org/10.1016/j.aim.2007.03.012}{\textit{Adv. Math.}} \textbf{215} (2007), 66--93,
 \href{https://arxiv.org/abs/math.QA/0505398}{math.QA/0505398}.

\bibitem{Kamnitzer10}
Kamnitzer J., Mirkovi\'{c}--{V}ilonen cycles and polytopes, \href{https://doi.org/10.4007/annals.2010.171.245}{\textit{Ann. of
 Math.}} \textbf{171} (2010), 245--294, \href{https://arxiv.org/abs/math.AG/0501365}{math.AG/0501365}.

\bibitem{KT09}
Kamnitzer J., Tingley P., A definition of the crystal commutor using
 {K}ashiwara's involution, \href{https://doi.org/10.1007/s10801-008-0136-1}{\textit{J.~Algebraic Combin.}} \textbf{29} (2009),
 261--268, \href{https://arxiv.org/abs/math.QA/0610952}{math.QA/0610952}.

\bibitem{KKM94}
Kang S.-J., Kashiwara M., Misra K.C., Crystal bases of {V}erma modules for
 quantum affine {L}ie algebras, \textit{Compositio Math.} \textbf{92} (1994),
 299--325.

\bibitem{KKMMNN91}
Kang S.-J., Kashiwara M., Misra K.C., Miwa T., Nakashima T., Nakayashiki A.,
 Affine crystals and vertex models, in Infinite Analysis, {P}art {A}, {B}
 ({K}yoto, 1991), \textit{Adv. Ser. Math. Phys.}, Vol.~16, World Sci. Publ.,
 River Edge, NJ, 1992, 449--484.

\bibitem{KKMMNN92}
Kang S.-J., Kashiwara M., Misra K.C., Miwa T., Nakashima T., Nakayashiki A.,
 Perfect crystals of quantum affine {L}ie algebras, \href{https://doi.org/10.1215/S0012-7094-92-06821-9}{\textit{Duke Math.~J.}}
 \textbf{68} (1992), 499--607.

\bibitem{KKS07}
Kang S.-J., Kim J.-A., Shin D.-U., Modified {N}akajima monomials and the
 crystal~{$B(\infty)$}, \href{https://doi.org/10.1016/j.jalgebra.2006.09.022}{\textit{J.~Algebra}} \textbf{308} (2007), 524--535.

\bibitem{KM94}
Kang S.-J., Misra K.C., Crystal bases and tensor product decompositions of
 {$U_q(G_2)$}-modules, \href{https://doi.org/10.1006/jabr.1994.1037}{\textit{J.~Algebra}} \textbf{163} (1994), 675--691.

\bibitem{K91}
Kashiwara M., On crystal bases of the {$Q$}-analogue of universal enveloping
 algebras, \href{https://doi.org/10.1215/S0012-7094-91-06321-0}{\textit{Duke Math.~J.}} \textbf{63} (1991), 465--516.

\bibitem{K93}
Kashiwara M., The crystal base and {L}ittelmann's refined {D}emazure character
 formula, \href{https://doi.org/10.1215/S0012-7094-93-07131-1}{\textit{Duke Math.~J.}} \textbf{71} (1993), 839--858.

\bibitem{K94}
Kashiwara M., Crystal bases of modified quantized enveloping algebra,
 \href{https://doi.org/10.1215/S0012-7094-94-07317-1}{\textit{Duke Math.~J.}} \textbf{73} (1994), 383--413.

\bibitem{K96}
Kashiwara M., Similarity of crystal bases, in Lie Algebras and their
 Representations ({S}eoul, 1995), \href{https://doi.org/10.1090/conm/194/02393}{\textit{Contemp. Math.}}, Vol.~194, Amer.
 Math. Soc., Providence, RI, 1996, 177--186.

\bibitem{K02}
Kashiwara M., Bases cristallines des groupes quantiques, \textit{Cours
 Sp\'{e}cialis\'{e}s}, Vol.~9, Soci\'{e}t\'{e}
 Math\'{e}matique de France, Paris, 2002.

\bibitem{Kashiwara02}
Kashiwara M., On level-zero representations of quantized affine algebras,
 \href{https://doi.org/10.1215/S0012-9074-02-11214-9}{\textit{Duke Math.~J.}} \textbf{112} (2002), 117--175,
 \href{https://arxiv.org/abs/math.QA/0010293}{math.QA/0010293}.

\bibitem{K03II}
Kashiwara M., Realizations of crystals, in Combinatorial and Geometric
 Representation Theory ({S}eoul, 2001), \href{https://doi.org/10.1090/conm/325/05668}{\textit{Contemp. Math.}}, Vol.~325,
 Amer. Math. Soc., Providence, RI, 2003, 133--139, \href{https://arxiv.org/abs/math.QA/0202268}{math.QA/0202268}.

\bibitem{KN94}
Kashiwara M., Nakashima T., Crystal graphs for representations of the
 {$q$}-analogue of classical {L}ie algebras, \href{https://doi.org/10.1006/jabr.1994.1114}{\textit{J.~Algebra}} \textbf{165}
 (1994), 295--345.

\bibitem{KS08}
Kim J.-A., Shin D.-U., Monomial realization of crystal bases {$B(\infty)$} for
 the quantum finite algebras, \href{https://doi.org/10.1007/s10468-007-9056-3}{\textit{Algebr. Represent. Theory}} \textbf{11}
 (2008), 93--105.

\bibitem{KS10}
Kim J.-A., Shin D.-U., Generalized {Y}oung walls and crystal bases for quantum
 affine algebra of type {$A$}, \href{https://doi.org/10.1090/S0002-9939-2010-10428-8}{\textit{Proc. Amer. Math. Soc.}} \textbf{138}
 (2010), 3877--3889.

\bibitem{KS13}
Kim J.-A., Shin D.-U., Zigzag strip bundles and crystals, \href{https://doi.org/10.1016/j.jcta.2013.02.007}{\textit{J.~Combin.
 Theory Ser.~A}} \textbf{120} (2013), 1087--1115.

\bibitem{KS14}
Kim J.-A., Shin D.-U., Zigzag strip bundles and highest weight crystals,
 \href{https://doi.org/10.1016/j.jalgebra.2014.04.016}{\textit{J.~Algebra}} \textbf{412} (2014), 15--50.

\bibitem{KS15}
Kim J.-A., Shin D.-U., Zigzag strip bundles and the crystal {$B(\infty)$} for
 quantum affine algebras, \href{https://doi.org/10.1080/00927872.2014.881835}{\textit{Comm. Algebra}} \textbf{43} (2015),
 1983--2004.

\bibitem{KS16}
Kim J.-A., Shin D.-U., Zigzag strip bundle realization of {$B(\Lambda_0)$} over
 {$U_q\big(C_n^{(1)}\big)$}, \href{https://doi.org/10.1007/s10468-016-9624-5}{\textit{Algebr. Represent. Theory}} \textbf{19} (2016),
 1423--1436.

\bibitem{Lee14}
Lee H., Crystal {$B(\lambda)$} as a subset of crystal {$B(\infty)$} expressed
 as tableaux for {$A_n$} type, \href{https://doi.org/10.1016/j.jalgebra.2013.10.031}{\textit{J.~Algebra}} \textbf{400} (2014),
 142--160.

\bibitem{LZ11}
Li B., Zhang H., Path realization of crystal {$B(\infty)$}, \href{https://doi.org/10.1007/s11464-010-0073-x}{\textit{Front.
 Math. China}} \textbf{6} (2011), 689--706.

\bibitem{L95}
Littelmann P., The path model for representations of symmetrizable
 {K}ac--{M}oody algebras, in Proceedings of the {I}nternational {C}ongress of
 {M}athematicians, {V}ols.~1,~2 ({Z}\"{u}rich, 1994), Birkh\"{a}user, Basel,
 1995, 298--308.

\bibitem{L95-2}
Littelmann P., Paths and root operators in representation theory, \href{https://doi.org/10.2307/2118553}{\textit{Ann.
 of Math.}} \textbf{142} (1995), 499--525.

\bibitem{L90}
Lusztig G., Canonical bases arising from quantized enveloping algebras,
 \href{https://doi.org/10.2307/1990961}{\textit{J.~Amer. Math. Soc.}} \textbf{3} (1990), 447--498.

\bibitem{NS01}
Naito S., Sagaki D., Lakshmibai--{S}eshadri paths fixed by a diagram
 automorphism, \href{https://doi.org/10.1006/jabr.2001.8904}{\textit{J.~Algebra}} \textbf{245} (2001), 395--412.

\bibitem{NS05II}
Naito S., Sagaki D., Crystal base elements of an extremal weight module fixed
 by a diagram automorphism, \href{https://doi.org/10.1007/s10468-005-0234-x}{\textit{Algebr. Represent. Theory}} \textbf{8}
 (2005), 689--707.

\bibitem{NS08III}
Naito S., Sagaki D., A modification of the {A}nderson--{M}irkovi\'{c}
 conjecture for {M}irkovi\'{c}--{V}ilonen polytopes in types {$B$} and {$C$},
 \href{https://doi.org/10.1016/j.jalgebra.2008.02.009}{\textit{J.~Algebra}} \textbf{320} (2008), 387--416, \href{https://arxiv.org/abs/0711.0071}{arXiv:0711.0071}.

\bibitem{NS10}
Naito S., Sagaki D., Crystal base elements of an extremal weight module fixed
 by a diagram automorphism {II}: case of affine {L}ie algebras, in
 Representation theory of algebraic groups and quantum groups, \href{https://doi.org/10.1007/978-0-8176-4697-4_9}{\textit{Progr.
 Math.}}, Vol.~284, Birkh\"{a}user/Springer, New York, 2010, 225--255.

\bibitem{Nakajima01}
Nakajima H., {$T$}-analogue of the {$q$}-characters of finite dimensional
 representations of quantum affine algebras, in Physics and Combinatorics,
 2000 ({N}agoya), \href{https://doi.org/10.1142/9789812810007_0009}{World Sci. Publ.}, River Edge, NJ, 2001, 196--219,
 \href{https://arxiv.org/abs/math.QA/0009231}{math.QA/0009231}.

\bibitem{Nakajima03II}
Nakajima H., {$t$}-analogs of {$q$}-characters of {K}irillov--{R}eshetikhin
 modules of quantum affine algebras, \href{https://doi.org/10.1090/S1088-4165-03-00164-X}{\textit{Represent. Theory}} \textbf{7}
 (2003), 259--274, \href{https://arxiv.org/abs/math.QA/0204185}{math.QA/0204185}.

\bibitem{Nakajima03}
Nakajima H., {$t$}-analogs of {$q$}-characters of quantum affine algebras of
 type {$A_n$, $D_n$}, in Combinatorial and Geometric Representation Theory
 ({S}eoul, 2001), \href{https://doi.org/10.1090/conm/325/05669}{\textit{Contemp. Math.}}, Vol.~325, Amer. Math. Soc.,
 Providence, RI, 2003, 141--160, \href{https://arxiv.org/abs/math.QA/0204184}{math.QA/0204184}.

\bibitem{Nakajima04}
Nakajima H., Quiver varieties and {$t$}-analogs of {$q$}-characters of quantum
 affine algebras, \href{https://doi.org/10.4007/annals.2004.160.1057}{\textit{Ann. of Math.}} \textbf{160} (2004), 1057--1097,
 \href{https://arxiv.org/abs/math.QA/0105173}{math.QA/0105173}.

\bibitem{Nakashima99}
Nakashima T., Polyhedral realizations of crystal bases for integrable highest
 weight modules, \href{https://doi.org/10.1006/jabr.1999.7920}{\textit{J.~Algebra}} \textbf{219} (1999), 571--597,
 \href{https://arxiv.org/abs/math.QA/9806085}{math.QA/9806085}.

\bibitem{NZ97}
Nakashima T., Zelevinsky A., Polyhedral realizations of crystal bases for
 quantized {K}ac--{M}oody algebras, \href{https://doi.org/10.1006/aima.1997.1670}{\textit{Adv. Math.}} \textbf{131} (1997),
 253--278, \href{https://arxiv.org/abs/q-alg/9703045}{q-alg/9703045}.

\bibitem{Okado13}
Okado M., Simplicity and similarity of {K}irillov--{R}eshetikhin crystals, in
 Recent Developments in Algebraic and Combinatorial Aspects of Representation
 Theory, \href{https://doi.org/10.1090/conm/602/12022}{\textit{Contemp. Math.}}, Vol.~602, Amer. Math. Soc., Providence, RI,
 2013, 183--194, \href{https://arxiv.org/abs/1212.0353}{arXiv:1212.0353}.

\bibitem{OS08}
Okado M., Schilling A., Existence of {K}irillov--{R}eshetikhin crystals for
 nonexceptional types, \href{https://doi.org/10.1090/S1088-4165-08-00329-4}{\textit{Represent. Theory}} \textbf{12} (2008),
 186--207, \href{https://arxiv.org/abs/0706.2224}{arXiv:0706.2224}.

\bibitem{OSS03IV}
Okado M., Schilling A., Shimozono M., A tensor product theorem related to
 perfect crystals, \href{https://doi.org/10.1016/S0021-8693(03)00349-1}{\textit{J.~Algebra}} \textbf{267} (2003), 212--245,
 \href{https://arxiv.org/abs/math.QA/0111288}{math.QA/0111288}.

\bibitem{OSS03III}
Okado M., Schilling A., Shimozono M., Virtual crystals and fermionic formulas
 of type {$D^{(2)}_{n+1}$}, {$A^{(2)}_{2n}$}, and~{$C^{(1)}_n$},
 \href{https://doi.org/10.1090/S1088-4165-03-00155-9}{\textit{Represent. Theory}} \textbf{7} (2003), 101--163,
 \href{https://arxiv.org/abs/math.QA/0105017}{math.QA/0105017}.

\bibitem{OSS03II}
Okado M., Schilling A., Shimozono M., Virtual crystals and {K}leber's
 algorithm, \href{https://doi.org/10.1007/s00220-003-0855-z}{\textit{Comm. Math. Phys.}} \textbf{238} (2003), 187--209,
 \href{https://arxiv.org/abs/math.QA/0209082}{math.QA/0209082}.

\bibitem{PS15}
Pan J., Scrimshaw T., Virtualization map for the {L}ittelmann path model,
 \href{https://doi.org/10.1007/s00031-017-9456-3}{\textit{Transform. Groups}}, {t}o appear, \href{https://arxiv.org/abs/1509.08103}{arXiv:1509.08103}.

\bibitem{combinat}
Sage-{C}ombinat: enhancing {S}age as a toolbox for computer exploration in
 algebraic combinatorics, 2008, available at
 \url{http://combinat.sagemath.org}.

\bibitem{sage}
Sage {M}athematics {S}oftware, {V}ersion 8.0, 2017, available at
 \url{http://www.sagemath.org}.

\bibitem{SST17}
Salisbury B., Schultze A., Tingley P., Combinatorial descriptions of the
 crystal structure on certain {PBW} bases, \href{https://doi.org/10.1007/s00031-017-9434-9}{\textit{Transform. Groups}}
 \textbf{23} (2018), 501--525, \href{https://arxiv.org/abs/1606.01978}{arXiv:1606.01978}.

\bibitem{SST16}
Salisbury B., Schultze A., Tingley P., P{BW} bases and marginally large
 tableaux in type~{D}, \href{https://doi.org/10.4310/JOC.2018.v9.n3.a6}{\textit{J.~Comb.}} \textbf{9} (2018), 535--551,
 \href{https://arxiv.org/abs/1606.02517}{arXiv:1606.02517}.

\bibitem{SalisburyS15}
Salisbury B., Scrimshaw T., A rigged configuration model for {$B(\infty)$},
 \href{https://doi.org/10.1016/j.jcta.2015.01.008}{\textit{J.~Combin. Theory Ser.~A}} \textbf{133} (2015), 29--57,
 \href{https://arxiv.org/abs/1404.6539}{arXiv:1404.6539}.

\bibitem{SalisburyS16}
Salisbury B., Scrimshaw T., Connecting marginally large tableaux and rigged
 configurations via crystals, \href{https://doi.org/10.1007/s10468-015-9587-y}{\textit{Algebr. Represent. Theory}} \textbf{19}
 (2016), 523--546, \href{https://arxiv.org/abs/1505.07040}{arXiv:1505.07040}.

\bibitem{SalisburyS16II}
Salisbury B., Scrimshaw T., Rigged configurations and the {$\ast$}-involution,
 \href{https://doi.org/10.1007/s11005-018-1063-2}{\textit{Lett. Math. Phys.}} \textbf{108} (2018), 1985--2007,
 \href{https://arxiv.org/abs/1601.06137}{arXiv:1601.06137}.

\bibitem{ST14}
Sam S.V., Tingley P., Combinatorial realizations of crystals via torus actions
 on quiver varieties, \href{https://doi.org/10.1007/s10801-013-0448-7}{\textit{J.~Algebraic Combin.}} \textbf{39} (2014),
 271--300, \href{https://arxiv.org/abs/1205.5847}{arXiv:1205.5847}.

\bibitem{Sav05}
Savage A., A geometric construction of crystal graphs using quiver varieties:
 extension to the non-simply laced case, in Infinite-Dimensional Aspects of
 Representation Theory and Applications, \href{https://doi.org/10.1090/conm/392/07358}{\textit{Contemp. Math.}}, Vol.~392,
 Amer. Math. Soc., Providence, RI, 2005, 133--154, \href{https://arxiv.org/abs/math.QA/0406073}{math.QA/0406073}.

\bibitem{Sav09}
Savage A., Crystals, quiver varieties, and coboundary categories for
 {K}ac--{M}oody algebras, \href{https://doi.org/10.1016/j.aim.2008.11.016}{\textit{Adv. Math.}} \textbf{221} (2009), 22--53,
 \href{https://arxiv.org/abs/0802.4083}{arXiv:0802.4083}.

\bibitem{S06}
Schilling A., Crystal structure on rigged configurations, \href{https://doi.org/10.1155/IMRN/2006/97376}{\textit{Int. Math.
 Res. Not.}} \textbf{2006} (2006), Art.~ID~97376, 27~pages,
 \href{https://arxiv.org/abs/math.QA/0508107}{math.QA/0508107}.

\bibitem{SchillingS15}
Schilling A., Scrimshaw T., Crystal structure on rigged configurations and the
 filling map, \textit{Electron.~J. Combin.} \textbf{22} (2015), 1.73,
 56~pages, \href{https://arxiv.org/abs/1409.2920}{arXiv:1409.2920}.

\bibitem{Stembridge03}
Stembridge J.R., A local characterization of simply-laced crystals,
 \href{https://doi.org/10.1090/S0002-9947-03-03042-3}{\textit{Trans. Amer. Math. Soc.}} \textbf{355} (2003), 4807--4823.

\bibitem{Sternberg07}
Sternberg P., On the local structure of doubly laced crystals,
 \href{https://doi.org/10.1016/j.jcta.2006.09.003}{\textit{J.~Combin. Theory Ser.~A}} \textbf{114} (2007), 809--824,
 \href{https://arxiv.org/abs/math.RT/0603547}{math.RT/0603547}.

\end{thebibliography}
\end{document}